\definecolor{ufogreen}{rgb}{0.24, 0.82, 0.44}
\begin{document}

\newtheorem{theorem}{Théorème}[section]
\newtheorem{theore}{Théorème}
\newtheorem{definition}[theorem]{Définition}
\newtheorem{proposition}[theorem]{Proposition}
\newtheorem{corollary}[theorem]{Corollaire}
\newtheorem*{con}{Conjecture}
\newtheorem*{remark}{Remarque}
\newtheorem*{remarks}{Remarques}
\newtheorem*{pro}{Problème}
\newtheorem*{examples}{Exemples}
\newtheorem*{example}{Exemple}
\newtheorem{lemma}[theorem]{Lemme}

\numberwithin{equation}{section}

\newcommand{\bZ}{\mathbb{Z}}
\newcommand{\bC}{\mathbb{C}}
\newcommand{\bN}{\mathbb{N}}
\newcommand{\bM}{\mathbb{N^{*}}}
\newcommand{\bR}{\mathbb{R}}

\title{$\lambda$-quiddité sur certains sous-groupes monogènes de $\bC$}

\author{Flavien Mabilat}

\date{}

\keywords{$\lambda$-quiddity; modular group; cyclic subgroup}

\address{Laboratoire de Mathématiques de Reims,
UMR9008 CNRS et Université de Reims Champagne-Ardenne, 
U.F.R. Sciences Exactes et Naturelles 
Moulin de la Housse - BP 1039 
51687 Reims cedex 2,
France}
\email{flavien.mabilat@univ-reims.fr}

\maketitle

\selectlanguage{french}
\begin{abstract}

Au cours de l'étude des frises de Coxeter, M. Cuntz a défini la notion de $\lambda$-quiddités et a soulevé le problème de l'étude de celles-ci sur certains sous-ensembles de $\bC$. L'objectif de ce texte est de mener à bien cette étude dans le cas de quelques sous-groupes monogènes de ($\bC,+$). On s'intéressera tout particulièrement aux cas des sous-groupes monogènes engendrés par $\sqrt{k}$ et par $i\sqrt{k}$ avec $k \in \bN$.
\\
\end{abstract}

\selectlanguage{english}
\begin{abstract}

During the study of Coxeter's friezes, M. Cuntz defined the concept of $\lambda$-quiddities and gave the problem of studying them over some subsets of $\bC$. The objective of this text is to carry out this study in the case of some cyclic subgroups of ($\bC,+$). In particular we will study the case of the cyclic subgroups generated by $\sqrt{k}$ and $i\sqrt{k}$, with $k \in \bN$.
\\
\end{abstract}

\selectlanguage{french}

\thispagestyle{empty}

\textbf{Mots clés :} $\lambda$-quiddité; groupe modulaire; sous-groupe monogène
\\
\\ \indent \textbf{Classification :} 05A05
\\
\begin{flushright}
 \textit{\og L'efficacité symbolique des mots ne s'exerce jamais que dans la mesure où celui 
\\qui la subit reconnaît celui qui l'exerce comme fondé à l'exercer [...]. \fg} 
\\ Pierre Bourdieu, \textit{Ce que parler veut dire}
\end{flushright}

\section{Introduction}
\label{Intro}

Depuis leur introduction au début des années soixante-dix, les frises de Coxeter font l'objet de nombreux travaux qui ont notamment mis en lumière de multiples liens avec plusieurs autres domaines mathématiques (voir par exemple \cite{Mo1}). Lorsque l'on étudie ces dernières, on est amené à considérer l'équation suivante :
\[M_{n}(a_{1},\ldots,a_{n}):=\begin{pmatrix}
   a_{n} & -1 \\[4pt]
    1    & 0 
   \end{pmatrix}
\begin{pmatrix}
   a_{n-1} & -1 \\[4pt]
    1    & 0 
   \end{pmatrix}
   \cdots
   \begin{pmatrix}
   a_{1} & -1 \\[4pt]
    1    & 0 
    \end{pmatrix}=-Id.\]
		
\noindent En effet, les solutions de cette équation sont celles qui interviennent dans la construction des frises de Coxeter (voir \cite{BR} et \cite{CH} proposition 2.4). Cela conduit naturellement à considérer la généralisation ci-dessous :
\begin{equation}
\label{a}
\tag{$E$}
M_{n}(a_1,\ldots,a_n)=\pm Id.
\end{equation}
\noindent Notons par ailleurs que les matrices $M_{n}(a_{1},\ldots,a_{n})$ interviennent également dans de nombreuses autres situations telles que l'étude des fractions continues ou encore la résolution des équations de Sturm-Liouville discrètes (voir par exemple \cite{O}).
\\
\\ \indent Les solutions de \eqref{a} sont appelées $\lambda$-quiddités. L'étude de ces dernières repose principalement sur une notion d'irréductibilité basée sur une opération sur les $n$-uplets d'éléments de l'ensemble considéré (voir \cite{C} et la section suivante). L'objectif principal est alors d'obtenir l'ensemble des solutions irréductibles de \eqref{a} lorsque les $a_{i}$ appartiennent à un ensemble fixé. On dispose de telles descriptions pour $\bN$ (voir \cite{C} Théorème 3.1 et la section \ref{class}), $\mathbb{Z}$ (voir \cite{CH} Théorème 6.2 et la section \ref{class}) et $\bZ[\alpha]$ avec $\alpha$ un nombre complexe transcendant (voir \cite{M2} Théorème 2.7 et la section \ref{class}). Par ailleurs, V. Ovsienko a donné une procédure récursive permettant de construire toutes les $\lambda$-quiddités sur $\bM$ (voir \cite{O} Théorème 2). On dispose également d'un certain nombre d'éléments sur les solutions de \eqref{a} dans les cas où on se place sur les anneaux $\bZ/N\bZ$ (voir \cite{M1,M3,M4,M5}) ou sur les corps finis (voir \cite{Mo2}).
\\
\\ \indent On souhaite ici se placer sur d'autres sous-ensembles de $\bC$ que ceux déjà étudiés, en lien avec un problème ouvert soulevé par M. Cuntz (voir \cite{C} problème 4.1). Pour cela, on va s'attacher à résoudre \eqref{a} sur un certain nombre de sous-groupes monogènes de $(\bC,+)$, c'est-à-dire sur un certain nombre de sous-groupes additifs engendrés par un seul élément. On s'intéressera tout particulièrement aux cas des sous-groupes monogènes engendrés par $\sqrt{k}$ et par $i\sqrt{k}$ (avec $k \in \bN$). Les énoncés des deux résultats principaux sont présentés dans la section suivante et démontrés respectivement dans les sections \ref{preuve25} et \ref{preuve26} tandis que la section \ref{pre} contient des résultats préliminaires.

\section{Définitions et résultats principaux}
\label{RP}

L'objectif de cette section est double. D'une part, on va donner les définitions des notions essentielles à l'étude menée dans la suite et, d'autre part, on va énoncer les résultats principaux de ce texte. Dans tout ce qui suit, $R$ est un sous-magma de $\bC$, c'est-à-dire un sous-ensemble de $\bC$ stable par addition. De plus, si $w \in \bC$, on note $<w>$ le sous-groupe de $(\bC,+)$ engendré par $w$. En d'autres termes, on a $<w>~=\mathbb{Z}w=\{kw, k \in \bZ\}$. On débute par la définition formelle du concept de $\lambda$-quiddité.

\begin{definition}[\cite{C}, définition 2.2]
\label{21}

Soit $n \in \bM$. On dit que le $n$-uplet $(a_{1},\ldots,a_{n})$ d'éléments de $R$ est une $\lambda$-quiddité sur $R$ de taille $n$ si $(a_{1},\ldots,a_{n})$ est une solution de \eqref{a}, c'est-à-dire si $(a_{1},\ldots,a_{n})$ vérifie $M_{n}(a_{1},\ldots,a_{n})=\pm Id.$ En cas d'absence d'ambiguïté, on parlera simplement de $\lambda$-quiddité.

\end{definition}

\noindent Afin de procéder à l'étude des $\lambda$-quiddités, on donne les deux définitions suivantes :

\begin{definition}[\cite{C}, lemme 2.7]
\label{22}

Soient $(n,m) \in (\bM)^{2}$, $(a_{1},\ldots,a_{n})$ un $n$-uplet d'éléments de $R$ et $(b_{1},\ldots,b_{m})$ un $m$-uplet d'éléments de $R$. On définit l'opération suivante: \[(a_{1},\ldots,a_{n}) \oplus (b_{1},\ldots,b_{m}):= (a_{1}+b_{m},a_{2},\ldots,a_{n-1},a_{n}+b_{1},b_{2},\ldots,b_{m-1}).\] Le $(n+m-2)$-uplet ainsi obtenu est appelé la somme de $(a_{1},\ldots,a_{n})$ avec $(b_{1},\ldots,b_{m})$.

\end{definition}

\begin{examples} 

{\rm On prend ici $R=\bN$. On a :} 
\begin{itemize}
\item $(2,0,3) \oplus (1,1,0) = (2,0,4,1)$;
\item $(2,3,4) \oplus (4,1,0,8) = (10,3,8,1,0)$;
\item $(1,3,5,3) \oplus (3,2,2,5,4) = (5,3,5,6,2,2,5)$;
\item $n \geq 2$, $(a_{1},\ldots,a_{n}) \oplus (0,0) = (0,0) \oplus (a_{1},\ldots,a_{n})=(a_{1},\ldots,a_{n})$.
\end{itemize}

\end{examples}

L'opération présentée ci-dessus n'est malheureusement ni commutative ni associative (voir \cite{WZ} exemple 2.1). Cependant, celle-ci est très utile pour l'étude des solutions de l'équation \eqref{a} car elle possède la propriété suivante : si $(b_{1},\ldots,b_{m})$ est une $\lambda$-quiddité sur $R$ alors la somme $(a_{1},\ldots,a_{n}) \oplus (b_{1},\ldots,b_{m})$ est une $\lambda$-quiddité sur $R$ si et seulement si $(a_{1},\ldots,a_{n})$ est une $\lambda$-quiddité sur $R$ (voir \cite{C,WZ} et \cite{M1} proposition 3.7).

\begin{definition}[\cite{C}, définition 2.5]
\label{23}

Soient $(a_{1},\ldots,a_{n})$ et $(b_{1},\ldots,b_{n})$ deux $n$-uplets d'éléments de $R$. On dit que $(a_{1},\ldots,a_{n}) \sim (b_{1},\ldots,b_{n})$ si $(b_{1},\ldots,b_{n})$ est obtenu par permutations circulaires de $(a_{1},\ldots,a_{n})$ ou de $(a_{n},\ldots,a_{1})$.

\end{definition}

On montre que $\sim$ est une relation d'équivalence sur l'ensemble des $n$-uplets d'éléments de $R$ (voir \cite{WZ}, lemme 1.7). De plus, si un $n$-uplet d'éléments de $R$ est une $\lambda$-quiddité sur $R$ alors tout $n$-uplet d'éléments de $R$ qui lui est équivalent est aussi une $\lambda$-quiddité sur $R$ (voir \cite{C} proposition 2.6). On peut maintenant définir la notion d'irréductibilité annoncée dans l'introduction.

\begin{definition}[\cite{C}, définition 2.9]
\label{24}

Une $\lambda$-quiddité $(c_{1},\ldots,c_{n})$ sur $R$ avec $n \geq 3$ est dite réductible s'il existe une $\lambda$-quiddité $(b_{1},\ldots,b_{l})$ sur $R$ et un $m$-uplet $(a_{1},\ldots,a_{m})$ d'éléments de $R$ tels que \begin{itemize}
\item $(c_{1},\ldots,c_{n}) \sim (a_{1},\ldots,a_{m}) \oplus (b_{1},\ldots,b_{l})$,
\item $m \geq 3$ et $l \geq 3$.
\end{itemize}
Une $\lambda$-quiddité est dite irréductible si elle n'est pas réductible.
\end{definition}

\begin{remark} 

{\rm Si $0 \in R$ alors $(0,0)$ est une $\lambda$-quiddité sur $R$. En revanche, elle n'est jamais considérée comme étant une $\lambda$-quiddité irréductible.}

\end{remark}

On dispose déjà de plusieurs résultats de classification des $\lambda$-quiddités irréductibles (voir section \ref{class}). Notre objectif ici est d'étudier les solutions de \eqref{a} sur certains sous-groupes monogènes de $(\bC,+$) et d'obtenir, si possible, une classification des $\lambda$-quiddités irréductibles sur ces ensembles. On commencera par le cas de $R=<\sqrt{k}>$ en démontrant le résultat ci-dessous :

\begin{theorem}
\label{25}

Soit $k \in \bN$.
\\
\\i) Si $k=0$. $(0,0,0,0)$ est la seule $\lambda$-quiddité irréductible sur $<\sqrt{k}>$.
\\
\\ii) Si $k=1$. Les $\lambda$-quiddités irréductibles sur $<\sqrt{1}>$ sont : \[\{(1,1,1), (-1,-1,-1), (0,a,0,-a), (a,0,-a,0); a \in \bZ-\{\pm 1\} \}.\]

\noindent iii) Si $k=2$. Les $\lambda$-quiddités irréductibles sur $<\sqrt{2}>$ sont : \[\{(\sqrt{2},\sqrt{2},\sqrt{2},\sqrt{2}), (-\sqrt{2},-\sqrt{2},-\sqrt{2},-\sqrt{2}), (0,a\sqrt{2},0,-a\sqrt{2}), (a\sqrt{2},0,-a\sqrt{2},0); a \in \bZ\}.\]

\noindent iv) Si $k=3$. Les $\lambda$-quiddités irréductibles sur $<\sqrt{3}>$ sont : \[\{(\sqrt{3},\sqrt{3},\sqrt{3},\sqrt{3},\sqrt{3},\sqrt{3}), (-\sqrt{3},-\sqrt{3},-\sqrt{3},-\sqrt{3},-\sqrt{3},-\sqrt{3}), (0,a\sqrt{3},0,-a\sqrt{3}), (a\sqrt{3},0,-a\sqrt{3},0); a \in \bZ\}.\]

\noindent v) Si $k \geq 4$. Les $\lambda$-quiddités irréductibles sur $<\sqrt{k}>$ sont : \[\{(0,a\sqrt{k},0,-a\sqrt{k}), (a\sqrt{k},0,-a\sqrt{k},0); a \in \bZ\}.\]

\end{theorem}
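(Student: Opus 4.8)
\emph{Plan de preuve.}

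Le plan est de traduire l'équation \eqref{a} sur $<\sqrt{k}>$ en un problème de mots dans un groupe de Hecke, en exploitant le fait que $\sqrt{k}$ est de la forme $2\cos(\pi/q)$ avec $q \in \{3,4,6\}$ exactement pour $k \in \{1,2,3\}$, tandis que $\sqrt{k} \geq 2$ dès que $k \geq 4$. On commence par écarter les cas dégénérés ou déjà connus. Pour $k=0$, on a $<\sqrt{0}>~=\{0\}$, donc une $\lambda$-quiddité est un $n$-uplet nul et $M_n(0,\ldots,0) = S^n$ avec $S = \left(\begin{smallmatrix} 0 & -1 \\ 1 & 0 \end{smallmatrix}\right)$ ; ceci vaut $\pm Id$ si et seulement si $n$ est pair, $(0,0,0,0)$ est irréductible (il n'y a pas de $\lambda$-quiddité de taille $3$ sur $\{0\}$ puisque $S^3 \neq \pm Id$) et tout $n$-uplet nul de taille $\geq 6$ s'écrit $(0,0,0,0) \oplus (0,\ldots,0)$, d'où i). Pour $k=1$, on a $<\sqrt{1}>~=\bZ$ et ii) découle de la classification des $\lambda$-quiddités irréductibles sur $\bZ$ (\cite{CH}, Théorème~6.2), l'exclusion $a \neq \pm 1$ provenant de $(0,1,0,-1) \sim (-1,-1,-1) \oplus (1,1,1)$.

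Pour les autres valeurs de $k$, on pose $T = \left(\begin{smallmatrix} 1 & \sqrt{k} \\ 0 & 1 \end{smallmatrix}\right)$ ; comme $\left(\begin{smallmatrix} a\sqrt{k} & -1 \\ 1 & 0 \end{smallmatrix}\right) = T^{a}S$, un $n$-uplet $(a_1\sqrt{k},\ldots,a_n\sqrt{k})$ est une $\lambda$-quiddité si et seulement si $T^{a_n}S\,T^{a_{n-1}}S\cdots T^{a_1}S = \pm Id$ dans le groupe de Hecke $\mathcal{H}_k = \langle T,S\rangle \subset \mathrm{SL}_2(\bR)$, c'est-à-dire si et seulement si le mot alterné associé est trivial dans son image $\overline{\mathcal{H}_k} \subset \mathrm{PSL}_2(\bR)$. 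Si $k \geq 4$, alors $\sqrt{k} \geq 2$ et, par le théorème de Hecke (ou un argument direct de ping-pong sur $\bR \cup \{\infty\}$), $\overline{\mathcal{H}_k}$ est le produit libre $\langle\overline{S}\rangle \ast \langle\overline{T}\rangle \cong \bZ/2\bZ \ast \bZ$. Un mot réduit alterné n'étant jamais trivial dans un produit libre, la trivialité de $\overline{T}^{a_n}\overline{S}\cdots\overline{T}^{a_1}\overline{S}$ impose que l'un des $a_i$ soit nul. Avec les résultats préliminaires de la section~\ref{pre} (une $\lambda$-quiddité sur $<\sqrt{k}>$ ayant une entrée nulle et de taille $\geq 5$ est réductible), toute $\lambda$-quiddité sur $<\sqrt{k}>$ de taille $\geq 5$ est donc réductible. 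La résolution explicite de $M_4(a_1\sqrt{k},\ldots,a_4\sqrt{k}) = \pm Id$ (le cas $-Id$ exige $a_1 a_2 k = 2$, impossible pour $k \geq 4$ ; le cas $Id$ donne exactement $(0,a\sqrt{k},0,-a\sqrt{k})$ et $(a\sqrt{k},0,-a\sqrt{k},0)$) et l'inexistence de $\lambda$-quiddité de taille $3$ sur $<\sqrt{k}>$ (la $(2,2)$-entrée de $M_3$ y vaudrait $\pm 1 \notin <\sqrt{k}>$), qui rend irréductible toute $\lambda$-quiddité de taille $4$, donnent alors v).

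Si $k \in \{2,3\}$, on a $\sqrt{k} = 2\cos(\pi/q)$ avec $q=4$, resp. $q=6$, donc $W := TS = \left(\begin{smallmatrix} \sqrt{k} & -1 \\ 1 & 0 \end{smallmatrix}\right)$ vérifie $W^{q} = -Id$, et $\overline{\mathcal{H}_k} = \langle\overline{S}\rangle \ast \langle\overline{W}\rangle \cong \bZ/2\bZ \ast \bZ/q\bZ$. Les $\lambda$-quiddités ayant une entrée nulle se traitent comme précédemment : elles sont réductibles dès que leur taille est $\geq 5$, et celles de taille $4$ possédant une entrée nulle sont exactement les familles $(0,a\sqrt{k},0,-a\sqrt{k})$ et $(a\sqrt{k},0,-a\sqrt{k},0)$, toutes irréductibles (toujours pas de $\lambda$-quiddité de taille $3$). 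Pour une $\lambda$-quiddité dont toutes les entrées sont non nulles, on réécrit $\overline{T} = \overline{W}\,\overline{S}$ et on met $\overline{T}^{a_n}\overline{S}\cdots\overline{T}^{a_1}\overline{S}$ sous forme normale dans $\bZ/2\bZ \ast \bZ/q\bZ$ : sa trivialité impose l'emploi du relateur $\overline{W}^{q}$ (faute de quoi, en raisonnant dans $\bZ/2\bZ \ast \bZ$, l'un des $a_i$ serait nul), et chaque emploi de ce relateur correspond, dans une décomposition par $\oplus$, à un facteur $(\sqrt{k},\ldots,\sqrt{k})$ ou $(-\sqrt{k},\ldots,-\sqrt{k})$ de taille $q$ ; si le relateur est employé au moins deux fois, la $\lambda$-quiddité est réductible, et s'il l'est exactement une fois, elle est équivalente à l'un de ces deux $q$-uplets (pour lesquels $M_q = -Id$). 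Ceux-ci sont irréductibles : pour $k=2$, une $\lambda$-quiddité de taille $4$ ne pourrait provenir que d'une somme de deux $\lambda$-quiddités de taille $3$, inexistantes sur $<\sqrt{2}>$ ; pour $k=3$, une $\lambda$-quiddité de taille $6$ ne pourrait provenir que d'une somme de deux $\lambda$-quiddités de taille $4$ (il n'y en a pas de taille $3$ ni $5$ sur $<\sqrt{3}>$), lesquelles ont toutes une entrée nulle, contrairement à $(\sqrt{3},\ldots,\sqrt{3})$. On obtient iii) et iv).

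La reformulation en termes de groupes de Hecke est immédiate ; le travail effectif porte sur deux points. Le premier est la correspondance précise entre l'opération $\oplus$ de la Définition~\ref{22} et les simplifications algébriques dans le produit libre, de façon à ce que \og le mot se simplifie\fg{} équivaille à \og la $\lambda$-quiddité est réductible au sens de la Définition~\ref{24}\fg{}, en particulier pour les cas limites de taille $4$ qui doivent rester irréductibles. Le second, qui devrait constituer l'obstacle principal, est l'analyse combinatoire — finie, mais un peu technique — des mots alternés triviaux de $\bZ/2\bZ \ast \bZ/q\bZ$ pour $q \in \{4,6\}$ et des $q$-uplets qu'ils produisent, afin de s'assurer que $(\sqrt{k},\ldots,\sqrt{k})$ et $(-\sqrt{k},\ldots,-\sqrt{k})$ de taille $q$ sont bien les seules $\lambda$-quiddités irréductibles sans entrée nulle ; il faut y ajouter les vérifications de routine que tous les uplets énumérés en i)--v) sont effectivement des $\lambda$-quiddités.
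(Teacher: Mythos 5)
Your plan correctly dispatches i), ii) and v): the cases $k=0$ and $k=1$ are handled as in the paper (reduction to $\{0\}$ and to the Cuntz--Holm classification over $\mathbb{Z}$), and for $k\geq 4$ your ping-pong/free-product argument ($\langle \overline{S}\rangle \ast \langle \overline{T}\rangle \cong \mathbb{Z}/2\mathbb{Z} \ast \mathbb{Z}$ for $\sqrt{k}\geq 2$, so an alternating word with all $a_i\neq 0$ is never trivial) is a legitimate alternative to the paper's route, which instead invokes the Cuntz--Holm bound (Théorème \ref{33}: some entry has modulus $<2$) to force a zero entry; both then conclude with Proposition \ref{32} and the size-$4$ analysis.

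However, for the cases $k=2$ and $k=3$ --- which are the actual content of the theorem --- your argument has a genuine gap. The central claim, namely that triviality of the word in $\mathbb{Z}/2\mathbb{Z} \ast \mathbb{Z}/q\mathbb{Z}$ with all $a_i\neq 0$ forces either reducibility with an $\oplus$-factor $(\pm\sqrt{k},\ldots,\pm\sqrt{k})$ of size $q$, or equivalence to that $q$-tuple, is asserted (``chaque emploi de ce relateur correspond \`a un facteur...''), not proved, and you yourself flag the normal-form analysis and the dictionary between word cancellation and the operation $\oplus$ of la définition \ref{22} as ``l'obstacle principal''. That obstacle is precisely the theorem: nothing in the free-product reformulation makes it automatic that an occurrence of the relator $\overline{W}^q$ in a van Kampen-type reduction localizes to $q$ \emph{consecutive} entries equal to $\pm\sqrt{k}$, nor that it yields a splitting of the cycle into two sub-$\lambda$-quiddities in the sense of la définition \ref{24}. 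The paper does this work differently: via the Euler algorithm it rescales to show that $(k_1,2k_2,\ldots,k_{2n-1},2k_{2n})$ (resp. $(k_1,3k_2,\ldots)$) is a $\lambda$-quiddity over $\mathbb{Z}$, then applies Théorème \ref{33} together with the reductions $M_{3}(a,\pm 1,b)=\mp M_{2}(a\mp 1,b\mp 1)$ to prove that a zero entry exists or that two (for $k=2$), resp. four (for $k=3$), consecutive entries equal $\pm\sqrt{k}$, which suffices to reduce by the $4$- resp. $6$-tuple. Note also that your irreducibility check for $(\sqrt{3},\ldots,\sqrt{3})$ uses the nonexistence of $\lambda$-quiddities of size $5$ on $\langle\sqrt{3}\rangle$ without justification; the paper proves the absence of odd sizes by a parity argument on continuants (it is recoverable in your setting, e.g.\ via the morphism $\mathbb{Z}/2\mathbb{Z}\ast\mathbb{Z}/q\mathbb{Z}\to\mathbb{Z}/2\mathbb{Z}$ sending $\overline{S}$ and $\overline{W}$ to $1$, but you must say so). As it stands, iii) and iv) are a programme, not a proof.
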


Ce théorème est démontré dans la section \ref{preuve25}. Par ailleurs, on constate que les listes de $\lambda$-quiddités irréductibles sur les sous-groupes $<\sqrt{k}>$ ($k \in \bM$) diffèrent en fonction des valeurs de $k$ alors que ces sous-groupes sont tous isomorphes. On considérera ensuite les cas $R=<i\sqrt{k}>$, avec $k \in \bN$. En particulier, on démontrera le résultat suivant :

\begin{theorem}
\label{26}

Soient $k \in \bN$ et $\Omega_{k}$ l'ensemble des $\lambda$-quiddités sur $<i\sqrt{k}>$. Si $k \neq 1$, on note $\Xi_{k}$ l'ensemble des $\lambda$-quiddités sur $<\sqrt{k}>$ et on note $\Xi_{1}$ l'ensemble des $\lambda$-quiddités de taille paire sur $<\sqrt{1}>=\mathbb{Z}$. 
\\
\\i) $\Omega_{k}$ et $\Xi_{k}$ ne contiennent que des éléments de taille paire. De plus, l'application \[\begin{array}{ccccc} 
\varphi_{k} & : & \Omega_{k} & \longrightarrow & \Xi_{k} \\
 & & (a_{1}i,a_{2}i,\ldots,a_{2n-1}i,a_{2n}i) & \longmapsto & (a_{1},-a_{2},\ldots,a_{2n-1},-a_{2n})  \\
\end{array}\] est une bijection.
\\
\\ii) Si $k \in \bN$, $k \neq 1$ alors $\varphi_{k}$ établit une bijection entre l'ensemble des $\lambda$-quiddités irréductibles sur $<i\sqrt{k}>$ et l'ensemble des $\lambda$-quiddités irréductibles sur $<\sqrt{k}>$.

\end{theorem}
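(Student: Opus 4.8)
The plan is to build everything on the explicit relation between the matrices $M_n(a_1 i, \ldots, a_n i)$ over $\langle i\sqrt{k}\rangle$ and the matrices $M_n(a_1, -a_2, a_3, -a_4, \ldots)$ over $\langle\sqrt{k}\rangle$. First I would record the conjugation/diagonal trick: writing $D=\mathrm{diag}(i,1)$ (or rather alternating conjugation by $\mathrm{diag}(i,-i)$), one checks by a direct computation that
\[
\begin{pmatrix} a_{2j}i & -1 \\ 1 & 0 \end{pmatrix}\begin{pmatrix} a_{2j-1}i & -1 \\ 1 & 0 \end{pmatrix}
= -\begin{pmatrix} -a_{2j} & -1 \\ 1 & 0 \end{pmatrix}\begin{pmatrix} a_{2j-1} & -1 \\ 1 & 0 \end{pmatrix},
\]
i.e. a product of two consecutive ``imaginary'' factors equals $(-1)$ times the corresponding product of two ``real'' factors with the sign of the even-indexed entry flipped. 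From this, $M_n(a_1 i,\ldots,a_n i) = \pm Id$ forces $n$ even (look at the parity of the top-left entry's type, or note that for $n$ odd the product has purely imaginary off-diagonal entries and cannot be $\pm Id$; this also re-proves that $\Xi_k$ and $\Omega_k$ contain only even-size tuples, which for $\langle\sqrt k\rangle$ with $k\neq 1$ is already implicit in Theorem \ref{25}). Grouping the $2n$ factors into $n$ pairs and pulling out the $n$ signs $(-1)^n$, one gets
\[
M_{2n}(a_1 i, a_2 i, \ldots, a_{2n}i) = (-1)^{n}\, M_{2n}(a_1, -a_2, a_3, -a_4, \ldots, a_{2n-1}, -a_{2n}),
\]
so the left side is $\pm Id$ iff the right side is, which is exactly the statement that $\varphi_k$ maps $\Omega_k$ into $\Xi_k$ (for $k=1$ one must restrict to even size on the target, whence the ad hoc definition of $\Xi_1$). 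Its inverse is visibly $(b_1,\ldots,b_{2n})\mapsto(b_1 i, -b_2 i, b_3 i, \ldots, -b_{2n}i)$, giving the bijection in part i).

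For part ii) I would first check that $\varphi_k$ is compatible with the two combinatorial operations underlying reducibility. For $\sim$: a circular shift by one position on a $2n$-tuple over $\langle i\sqrt k\rangle$ corresponds, after applying $\varphi_k$, to a circular shift followed by a global sign change $(b_1,\ldots)\mapsto(-b_1,\ldots,-b_{2n})$ on the image (because the alternating sign pattern $+,-,+,-,\ldots$ is itself shifted); a shift by two positions corresponds to a plain shift; and reversal $(a_1,\ldots,a_{2n})\mapsto(a_{2n},\ldots,a_1)$ corresponds on the image to reversal possibly composed with a global sign change, depending on parity. In all cases the $\sim$-class of $\varphi_k(x)$ is the union of the $\sim$-class of ``$\varphi_k$ of the $\sim$-class of $x$'' with its global negative; since negation of a $\lambda$-quiddity is again a $\lambda$-quiddity and, when $k\neq 1$, $\Xi_k$ is stable under negation (each entry is in $\langle\sqrt k\rangle$, a group), this causes no trouble. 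For $\oplus$: I would verify directly from Definition \ref{22} that
\[
\varphi_k\big((a_1 i,\ldots,a_n i)\oplus(b_1 i,\ldots,b_m i)\big)
\]
equals, up to $\sim$ and a possible global sign, $\varphi_k(a_1 i,\ldots)\oplus\varphi_k(b_1 i,\ldots)$ — the only subtlety being how the alternating pattern on the concatenated tuple lines up at the two ``glued'' positions $a_1+b_m$ and $a_n+b_1$, which is a finite parity case-check on whether $n$ and $m$ are even or odd (they need not be even individually even though $n+m-2$ is forced to have the right parity in the $\lambda$-quiddité case).

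With these two compatibilities in hand, part ii) follows formally. If $x\in\Omega_k$ is reducible, say $x\sim y\oplus z$ with $y,z$ of size $\geq 3$ and $z$ a $\lambda$-quiddité on $\langle i\sqrt k\rangle$, then applying $\varphi_k$ and the two lemmas above exhibits $\varphi_k(x)$ (up to $\sim$ and a global sign) as a $\oplus$-decomposition with both blocks of size $\geq 3$ and the right block a $\lambda$-quiddité on $\langle\sqrt k\rangle$ — so $\varphi_k(x)$ is reducible (using that a negated $\lambda$-quiddité is a $\lambda$-quiddité, and that $\sim$ preserves reducibility). Conversely, since $\varphi_k$ is a bijection $\Omega_k\to\Xi_k$ with $\Xi_k$ the full set of $\lambda$-quiddités over $\langle\sqrt k\rangle$ when $k\neq 1$, the same argument applied to $\varphi_k^{-1}$ shows reducibility of $\varphi_k(x)$ implies reducibility of $x$. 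Hence $\varphi_k$ restricts to a bijection between the irreducible $\lambda$-quiddités on the two groups.

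The main obstacle I anticipate is purely bookkeeping rather than conceptual: tracking the global sign $(-1)^n$ and the shifted alternating sign pattern through $\sim$, through reversal, and especially through $\oplus$ when the two summands have odd length. One has to be careful that the exceptional value $k=1$ — where $\langle\sqrt 1\rangle=\bZ$ contains odd-size $\lambda$-quiddités like $(1,1,1)$ and $(-1,-1,-1)$, which have no preimage under $\varphi_1$ since $\Omega_1$ is all even — is exactly the reason part ii) is stated only for $k\neq 1$, and that for $k\neq 1$ every sign ambiguity lands back inside $\Xi_k$ because it is closed under negation. Everything else is a direct matrix computation plus the transport-of-structure formalism already set up in Section \ref{RP}.
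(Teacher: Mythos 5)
Your overall plan is workable and your part ii) is essentially the paper's own argument (transport of $\sim$ and $\oplus$ through $\varphi_k$, with Proposition \ref{opposé} absorbing the sign changes, and $k=1$ excluded for exactly the reason you give), but the computation on which you build part i) is false as displayed. With the paper's convention $M_n(a_1,\dots,a_n)=\left(\begin{smallmatrix} a_n & -1\\ 1 & 0\end{smallmatrix}\right)\cdots\left(\begin{smallmatrix} a_1 & -1\\ 1 & 0\end{smallmatrix}\right)$ one has
\[
\begin{pmatrix} a_{2j}i & -1\\ 1 & 0\end{pmatrix}\begin{pmatrix} a_{2j-1}i & -1\\ 1 & 0\end{pmatrix}=\begin{pmatrix} -a_{2j-1}a_{2j}-1 & -a_{2j}i\\ a_{2j-1}i & -1\end{pmatrix},
\qquad
-\begin{pmatrix} -a_{2j} & -1\\ 1 & 0\end{pmatrix}\begin{pmatrix} a_{2j-1} & -1\\ 1 & 0\end{pmatrix}=\begin{pmatrix} a_{2j-1}a_{2j}+1 & -a_{2j}\\ -a_{2j-1} & 1\end{pmatrix},
\]
and these are not equal: the off-diagonal entries of the first are purely imaginary while those of the second are real, and the diagonals have opposite signs. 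Consequently your global identity $M_{2n}(a_1i,\dots,a_{2n}i)=(-1)^nM_{2n}(a_1,-a_2,\dots,-a_{2n})$ is false (already for $n=1$). What is true — and rescues your approach — is the conjugation you allude to but do not carry out: with $D=\mathrm{diag}(i,1)$,
\[
\begin{pmatrix} a_{2j}i & -1\\ 1 & 0\end{pmatrix}\begin{pmatrix} a_{2j-1}i & -1\\ 1 & 0\end{pmatrix}=D^{-1}\begin{pmatrix} -a_{2j} & -1\\ 1 & 0\end{pmatrix}\begin{pmatrix} a_{2j-1} & -1\\ 1 & 0\end{pmatrix}D,
\]
hence $M_{2n}(a_1i,\dots,a_{2n}i)=D^{-1}M_{2n}(a_1,-a_2,\dots,a_{2n-1},-a_{2n})D$ with no sign at all, and one side is $\pm Id$ if and only if the other is. Your odd-size argument also points at the wrong entries: for $n$ odd the product has the form $\left(\begin{smallmatrix} i\alpha & \beta\\ \gamma & i\delta\end{smallmatrix}\right)$ with $\alpha,\beta,\gamma,\delta$ real, so it is the \emph{diagonal} entries that are purely imaginary and exclude $\pm Id$; purely imaginary off-diagonal entries would be no obstruction, since the off-diagonal entries of $\pm Id$ are $0$.

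Once corrected, your route to part i) is genuinely different from the paper's: the paper never manipulates the $2\times 2$ products directly, but writes the entries of $M_n$ as continuants and uses Euler's rule to get $K_{2n}(a_1i,\dots,a_{2n}i)=K_{2n}(a_1,-a_2,\dots,-a_{2n})$ and $K_{2n-1}(a_1i,\dots,a_{2n-1}i)=iK_{2n-1}(a_1,-a_2,\dots,a_{2n-1})$; the conjugation identity gives the same conclusion more quickly. For part ii), note that the "odd-length summand" bookkeeping you worry about cannot arise when $k\neq 1$: both blocks of a decomposition are themselves $\lambda$-quiddités (one by Definition \ref{24}, the other by the $\oplus$-property recalled after Définition \ref{22}), hence of even size by part i) and Théorème \ref{25}, which is precisely why the statement of ii) excludes $k=1$; with that remark your sketch coincides with the paper's proof.
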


\noindent Ce résultat est prouvé dans la section \ref{preuve26}.

\section{Résultats préliminaires et premiers éléments sur le cas des sous-groupes monogènes}
\label{pre}

L'objectif de cette section est de regrouper plusieurs résultats déjà connus sur l'étude des $\lambda$-quiddités et de donner quelques premiers éléments concernant le cas des sous-groupes monogènes.

\subsection{Résultats préliminaires}
\label{Rp}

On commence par rappeler les solutions de \eqref{a} sur $\bC$ pour les petites valeurs de $n$.

\begin{proposition}[\cite{CH}, exemple 2.7]
\label{31}
\begin{itemize}
\item \eqref{a} n'a pas de solution de taille 1.
\item $(0,0)$ est la seule solution de \eqref{a} de taille 2.
\item $(1,1,1)$ et $(-1,-1,-1)$ sont les seules solutions de \eqref{a} de taille 3.
\item Les solutions de \eqref{a} pour $n=4$ sont les 4-uplets suivants $(-a,b,a,-b)$ avec $ab=0$ (c'est-à-dire $a=0$ ou $b=0$) et $(a,b,a,b)$ avec $ab=2$.
\end{itemize}
 
\end{proposition}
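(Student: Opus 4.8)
The plan is to treat the four assertions as four independent direct computations: for each $n \in \{1,2,3,4\}$ I would expand the product $M_{n}(a_{1},\ldots,a_{n})$ fully, then compare its four entries with those of $\pm Id$ and solve the resulting system. Writing the elementary factor as $\begin{pmatrix} x & -1 \\ 1 & 0 \end{pmatrix}$, I would build the products incrementally, using $M_{n}=\begin{pmatrix} a_{n} & -1 \\ 1 & 0 \end{pmatrix}M_{n-1}$ to pass from one size to the next, so each step is a single $2\times 2$ multiplication.

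For $n=1$ the claim is immediate, since $M_{1}(a_{1})=\begin{pmatrix} a_{1} & -1 \\ 1 & 0 \end{pmatrix}$ has lower-left entry $1$, which can never equal the $0$ of $\pm Id$. For $n=2$ one computes $M_{2}(a_{1},a_{2})=\begin{pmatrix} a_{1}a_{2}-1 & -a_{2} \\ a_{1} & -1 \end{pmatrix}$; the lower-right entry $-1$ rules out $+Id$, and the two off-diagonal entries then force $a_{1}=a_{2}=0$, after which the upper-left entry automatically equals $-1$, so $(0,0)$ is the unique solution. For $n=3$ one gets
\[M_{3}(a_{1},a_{2},a_{3})=\begin{pmatrix} a_{1}a_{2}a_{3}-a_{1}-a_{3} & 1-a_{2}a_{3} \\ a_{1}a_{2}-1 & -a_{2} \end{pmatrix};\]
the lower-right entry $-a_{2}=\pm 1$ fixes $a_{2}=\mp 1$, the lower-left and upper-right entries then determine $a_{1}$ and $a_{3}$ in turn, and a final check of the upper-left entry confirms consistency, yielding exactly $(-1,-1,-1)$ for $+Id$ and $(1,1,1)$ for $-Id$.

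The size-$4$ case is the substantive one, and I would organize it around the lower-right entry of
\[M_{4}(a_{1},a_{2},a_{3},a_{4})=\begin{pmatrix} a_{1}a_{2}a_{3}a_{4}-a_{1}a_{2}-a_{1}a_{4}-a_{3}a_{4}+1 & a_{2}+a_{4}-a_{2}a_{3}a_{4} \\ a_{1}a_{2}a_{3}-a_{1}-a_{3} & 1-a_{2}a_{3} \end{pmatrix}.\]
The condition $1-a_{2}a_{3}=\pm 1$ splits the analysis. In the $+Id$ case one gets $a_{2}a_{3}=0$; the vanishing of the lower-left and upper-right entries then gives $a_{3}=-a_{1}$ and $a_{4}=-a_{2}$, and setting $a=-a_{1}$, $b=a_{2}$ recovers the family $(-a,b,a,-b)$ with $ab=0$. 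In the $-Id$ case one gets $a_{2}a_{3}=2$; the lower-left entry simplifies (using $a_{1}a_{2}a_{3}=2a_{1}$) to give $a_{3}=a_{1}$, whence $a_{1}a_{2}=a_{2}a_{3}=2$, while the upper-right entry gives $a_{4}=a_{2}$, producing the family $(a,b,a,b)$ with $ab=2$.

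The one point that genuinely needs care is the bookkeeping of the upper-left entry in each sign case: I would verify that it introduces no independent constraint. For $+Id$ it reduces, after substituting $a_{3}=-a_{1}$ and $a_{4}=-a_{2}$, to $a_{1}a_{2}=0$, which is the same relation as $a_{2}a_{3}=0$; for $-Id$, substituting $a_{3}=a_{1}$, $a_{4}=a_{2}$, $a_{1}a_{2}=2$ makes it evaluate to $(a_{1}a_{2})^{2}-3a_{1}a_{2}+1=-1$ identically. Confirming this in both cases guarantees that no spurious solutions are created and none are lost; everything else is routine matrix multiplication.
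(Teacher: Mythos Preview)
Your proposal is correct. The paper does not supply its own proof of this proposition: it is quoted verbatim from \cite{CH}, exemple 2.7, and used as a black box throughout. Your entry-by-entry expansion of $M_{n}$ for $n\le 4$, followed by the case split on the lower-right entry $1-a_{2}a_{3}=\pm 1$ when $n=4$, is exactly the standard direct computation one would expect for such a statement, and all the algebraic checks (including the redundancy of the upper-left entry in both sign cases) are carried out correctly.
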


Si l'on se place sur un sous-magma $R$ de $\bC$ alors il suffit d'adapter le résultat précédent aux éléments présents dans $R$. Par exemple, si $1,-1 \notin R$ alors il n'y a pas de $\lambda$-quiddité de taille 3 sur $R$.

\begin{proposition}[\cite{M4}, proposition 3.5]
\label{opposé}

Soit $R$ un sous-magma de $\bC$ tel que si $x \in R$ alors $-x \in R$. Soit $n$ un entier naturel supérieur à 2. $(a_{1},\ldots,a_{n})$ est une solution (irréductible) de \eqref{a} sur $R$ si et seulement si $(-a_{1},\ldots,-a_{n})$ est une solution (irréductible) de \eqref{a} sur $R$.

\end{proposition}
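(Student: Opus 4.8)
The plan is to reduce the whole statement to a single matrix identity. Set $S=\begin{pmatrix}-1&0\\0&1\end{pmatrix}$, so $S^{2}=Id$, and note the elementary relation $\begin{pmatrix}-a&-1\\1&0\end{pmatrix}=-S\begin{pmatrix}a&-1\\1&0\end{pmatrix}S$, valid for every $a\in\bC$ (a one-line check). Writing $A_{j}=\begin{pmatrix}a_{j}&-1\\1&0\end{pmatrix}$, so that $M_{n}(a_{1},\ldots,a_{n})=A_{n}A_{n-1}\cdots A_{1}$, and substituting $-SA_{j}S$ for each $A_{j}$ while cancelling the interior blocks $S^{2}=Id$ between consecutive factors, I obtain
\[M_{n}(-a_{1},\ldots,-a_{n})=(-1)^{n}\,S\,M_{n}(a_{1},\ldots,a_{n})\,S.\]

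The \emph{solution} half is then immediate: if $M_{n}(a_{1},\ldots,a_{n})=\varepsilon\,Id$ with $\varepsilon\in\{-1,1\}$, then $M_{n}(-a_{1},\ldots,-a_{n})=(-1)^{n}\varepsilon\,S^{2}=\pm Id$, and since $R$ is stable under negation all the entries $-a_{i}$ lie in $R$, so $(-a_{1},\ldots,-a_{n})$ is a solution of \eqref{a} on $R$; applying the same statement to $(-a_{1},\ldots,-a_{n})$ and using $-(-a_{i})=a_{i}$ gives the converse.

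For the \emph{irreducible} half I would first record that the negation map $\nu\colon(c_{1},\ldots,c_{k})\mapsto(-c_{1},\ldots,-c_{k})$ is a well-defined involution on tuples of elements of $R$ (again because $R$ is stable under negation) and is compatible with the two ingredients of reducibility: the explicit formula of Definition \ref{22} gives $\nu\bigl((a_{1},\ldots,a_{n})\oplus(b_{1},\ldots,b_{m})\bigr)=\nu(a_{1},\ldots,a_{n})\oplus\nu(b_{1},\ldots,b_{m})$, and since circular shifts and reversal commute with $\nu$ one has $(c_{1},\ldots,c_{k})\sim(d_{1},\ldots,d_{k})$ if and only if $\nu(c_{1},\ldots,c_{k})\sim\nu(d_{1},\ldots,d_{k})$. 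Consequently, if a $\lambda$-quiddité $(a_{1},\ldots,a_{n})$ on $R$ with $n\geq3$ is reducible, say $(a_{1},\ldots,a_{n})\sim(c_{1},\ldots,c_{m})\oplus(b_{1},\ldots,b_{l})$ with $(b_{1},\ldots,b_{l})$ a $\lambda$-quiddité on $R$ and $m,l\geq3$, then applying $\nu$ yields $(-a_{1},\ldots,-a_{n})\sim(-c_{1},\ldots,-c_{m})\oplus(-b_{1},\ldots,-b_{l})$, where $(-b_{1},\ldots,-b_{l})$ is again a $\lambda$-quiddité on $R$ by the solution half, $(-c_{1},\ldots,-c_{m})$ has its entries in $R$, and the sizes $m,l$ are unchanged; hence $(-a_{1},\ldots,-a_{n})$ is reducible. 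Running the same argument starting from $(-a_{1},\ldots,-a_{n})$ gives the reverse implication, and combining with the solution half shows that $(a_{1},\ldots,a_{n})$ is an irreducible $\lambda$-quiddité on $R$ if and only if $(-a_{1},\ldots,-a_{n})$ is. The case $n=2$ is trivial: by Proposition \ref{31} the only $\lambda$-quiddité of size $2$ is $(0,0)$, which is fixed by $\nu$ and is never irreducible.

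I do not expect a genuine obstacle here: the only things to watch are the parity factor $(-1)^{n}$ in the matrix identity and the fact that the hypothesis on $R$ (stability under negation) is precisely what keeps every auxiliary tuple ($-b_{\bullet}$, $-c_{\bullet}$) inside $R$; everything else is routine bookkeeping.
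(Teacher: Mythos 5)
Your argument is correct and complete. Note that the paper itself gives no in-text proof here: it simply states that the proof of proposition 3.5 of \cite{M4} adapts naturally to a sub-magma of $\bC$, so there is no written argument to compare yours against line by line. Your route — the conjugation identity $M_{n}(-a_{1},\ldots,-a_{n})=(-1)^{n}SM_{n}(a_{1},\ldots,a_{n})S$ with $S=\mathrm{diag}(-1,1)$ for the solution part, then the observation that componentwise negation is equivariant for both $\oplus$ (immediate from Définition \ref{22}) and the equivalence $\sim$ (circular shifts and reversal commute with negation), so that reducibility is transported in both directions — is exactly the kind of adaptation the paper is invoking, and it is the standard way to prove this statement; the hypothesis that $R$ is stable under $x\mapsto -x$ is used precisely where you use it, to keep the tuples $(-a_{\bullet})$, $(-b_{\bullet})$, $(-c_{\bullet})$ inside $R$. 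The only cosmetic remark is that the sign $(-1)^{n}$ is harmless because \eqref{a} allows both $Id$ and $-Id$, which you point out; and your handling of $n=2$ is consistent with the paper's convention that irreducibility is only discussed for sizes $\geq 3$. In short: correct, self-contained, and it makes explicit what the paper leaves to the reference.
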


\begin{proof}

La preuve donnée dans \cite{M4} s'adapte naturellement au cas d'un sous-magma de $\bC$.

\end{proof}

\begin{proposition}
\label{32}

Soit $G$ un sous-groupe de $\bC$. 
\\i) Une $\lambda$-quiddité de taille supérieure à 5 contenant 0 est réductible.
\\ii) Si $1 \notin G$ alors les $\lambda$-quiddités de taille 4 sont irréductibles.
 
\end{proposition}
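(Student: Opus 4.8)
Le plan est de traiter les deux points à l'aide de l'opération $\oplus$ et de la liste des solutions de petite taille donnée dans la proposition \ref{31}.

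Pour i), je partirais d'une $\lambda$-quiddité $(c_1,\ldots,c_n)$ sur $G$ de taille $n \geq 5$ contenant une coordonnée nulle. Comme $\sim$ autorise toutes les permutations circulaires, on peut supposer $c_n = 0$ ; je poserais alors $x := c_{n-1}$. Puisque $G$ est un sous-groupe de $(\bC,+)$, les éléments $0$, $x$ et $-x$ appartiennent à $G$, et $(0,x,0,-x)$ est une $\lambda$-quiddité sur $G$ (c'est un $4$-uplet de la forme $(-a,b,a,-b)$ avec $ab=0$, cf. proposition \ref{31}). Un calcul direct avec la formule de la définition \ref{22} donne
\[(c_1+x,c_2,\ldots,c_{n-2}) \oplus (0,x,0,-x) = (c_1,c_2,\ldots,c_{n-2},x,0) = (c_1,\ldots,c_n),\]
où $(c_1+x,c_2,\ldots,c_{n-2})$ est un $(n-2)$-uplet d'éléments de $G$. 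Comme $n-2 \geq 3$ et $4 \geq 3$, ceci exhibe $(c_1,\ldots,c_n)$ comme réductible. (La propriété de compatibilité de $\oplus$ rappelée après la définition \ref{22} assure de surcroît que le facteur $(c_1+x,\ldots,c_{n-2})$ est lui-même une $\lambda$-quiddité, mais ce n'est pas nécessaire pour la conclusion.)

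Pour ii), je raisonnerais par l'absurde : supposons qu'une $\lambda$-quiddité de taille $4$ sur $G$ soit réductible, donc équivalente à $(a_1,\ldots,a_m) \oplus (b_1,\ldots,b_l)$ avec $(b_1,\ldots,b_l)$ une $\lambda$-quiddité sur $G$ et $m,l \geq 3$. Le membre de gauche ayant $4$ termes et celui de droite $m+l-2$, on obtient $m+l=6$, d'où $m=l=3$. Or, d'après la proposition \ref{31}, les seules $\lambda$-quiddités de taille $3$ sont $(1,1,1)$ et $(-1,-1,-1)$ ; le fait que $(b_1,\ldots,b_l)$ soit une $\lambda$-quiddité sur $G$ forcerait $1 \in G$ (directement, ou via $1 = -(-1)$ puisque $G$ est un sous-groupe), ce qui contredit $1 \notin G$. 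Aucune $\lambda$-quiddité de taille $4$ sur $G$ n'est donc réductible.

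Je n'anticipe pas d'obstacle sérieux : tout repose sur une unique identité explicite pour $\oplus$ combinée à la classification des $\lambda$-quiddités de petite taille. Les seuls points demandant un peu d'attention seront la rotation circulaire amenant une coordonnée nulle en dernière position (pour rendre la décomposition compatible avec $\sim$), la vérification que tous les termes construits au point i) restent dans le sous-groupe $G$, et le décompte élémentaire des tailles ($n-2 \geq 3$ au point i), $m=l=3$ au point ii)).
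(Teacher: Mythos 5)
Votre preuve est correcte et suit essentiellement la même démarche que celle de l'article : pour i), on isole le zéro par permutation circulaire et on retranche une $\lambda$-quiddité de taille 4 du type $(0,x,0,-x)$ (l'article utilise la variante $(-a_{i+1},0,a_{i+1},0)$, ce qui revient au même) ; pour ii), le même décompte $m+l=6$ donne $m=l=3$ et l'absence de solutions de taille 3 lorsque $1\notin G$ (donc $-1\notin G$) conclut. Rien à signaler.
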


\begin{proof}

i) Comme $G$ est un groupe, on a $0 \in G$. Soit $(a_{1},\ldots,a_{n})$ une $\lambda$-quiddité sur $G$ de taille supérieure à 5. On suppose qu'il existe $i$ dans $[\![1;n]\!]$ tel que $a_{i}=0$. On a :

\begin{eqnarray*}
(a_{i+2},\ldots,a_{n},a_{1},\ldots,a_{i},a_{i+1}) &=& (a_{i+2},\ldots,a_{n},a_{1},\ldots,a_{i-1}+a_{i+1}) \\
                                                  &\oplus & (-a_{i+1},0,a_{i+1},0).\\
\end{eqnarray*}

\noindent Or, $(-a_{i+1},0,a_{i+1},0)$ est une $\lambda$-quiddité sur $G$ (voir proposition \ref{31}) et $(a_{i+2},\ldots,a_{n},a_{1},\ldots,a_{i-1}+a_{i+1})$ est de taille $n-2 \geq 3$. Ainsi, $(a_{1},\ldots,a_{n})$ est une $\lambda$-quiddité réductible.
\\
\\ii) Supposons par l'absurde qu'une $\lambda$-quiddité $(c_{1},\ldots,c_{4})$ est réductible. Il existe deux solutions de \eqref{a} $(a_{1},\ldots,a_{l})$ et $(b_{1},\ldots,b_{l'})$ telles que $l, l' \geq 3$ et $(c_{1},\ldots,c_{4}) \sim (a_{1},\ldots,a_{l}) \oplus (b_{1},\ldots,b_{l'})$. On a $6=l+l'$. Ainsi, $l=l'=3$. Or, comme $1 \notin G$ et $-1 \notin G$ (puisque $G$ est un groupe), il n'y a pas de $\lambda$-quiddité de taille 3 sur $G$ (par la proposition \ref{31}). Ceci est absurde.

\end{proof}

Cette proposition montre que la présence de certains éléments dans les $\lambda$-quiddités est un atout important pour l'étude de ces dernières. Il est donc intéressant de chercher des résultats permettant de garantir l'existence de nombres particuliers dans les solutions de \eqref{a}. Dans cet esprit, on dispose du résultat majeur suivant :

\begin{theorem}[Cuntz-Holm, \cite{CH} corollaire 3.3]
\label{33}

Soit $(a_{1},\ldots,a_{n}) \in \bC^{n}$ une $\lambda$-quiddité. $\exists (i,j) \in [\![1;n]\!]^{2}$, $i \neq j$, tels que $\left|a_{i}\right| < 2$ et $\left|a_{j}\right| < 2$.

\end{theorem}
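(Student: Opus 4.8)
The plan is to show that a $\lambda$-quiddité cannot have $n-1$ or $n$ of its entries of modulus $\geq 2$, which is exactly the negation of the claimed conclusion. Since \eqref{a} has no solution of size $1$ (Proposition \ref{31}), I may assume $n \geq 2$ throughout. Write $A_j = \begin{pmatrix} a_j & -1 \\ 1 & 0\end{pmatrix}$, so that $M_n(a_1,\ldots,a_n) = A_n\cdots A_1$; note also that $M_n(a_2,\ldots,a_n,a_1) = A_1 M_n(a_1,\ldots,a_n) A_1^{-1}$, so a circular permutation of a $\lambda$-quiddité is again a $\lambda$-quiddité and I am free to rotate the tuple.

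The engine of the proof is an elementary growth estimate for continuants. Given $b_1,\ldots,b_m \in \bC$, set $p_{-1} = 0$, $p_0 = 1$ and $p_j = b_j p_{j-1} - p_{j-2}$ for $j \geq 1$; a one-line induction on $j$ shows that the first column of $M_j(b_1,\ldots,b_j)$ equals $\begin{pmatrix} p_j \\ p_{j-1}\end{pmatrix}$. I claim that if $|b_j| \geq 2$ for every $j$ then
\[ 1 = |p_0| < |p_1| < \cdots < |p_m|. \]
Indeed $|p_1| = |b_1| \geq 2 > 1 = |p_0|$, and if $|p_{j-2}| < |p_{j-1}|$ then $|p_j| \geq |b_j|\,|p_{j-1}| - |p_{j-2}| > 2|p_{j-1}| - |p_{j-1}| = |p_{j-1}|$. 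In particular $p_m \neq 0$ as soon as $m \geq 1$.

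Now suppose, for contradiction, that at most one entry has modulus $< 2$. If every $a_i$ has modulus $\geq 2$: the identity $M_n(a_1,\ldots,a_n) = \pm Id$ forces the $(2,1)$-entry of $M_n$ to be $0$, but that entry is $p_{n-1}$, the continuant of $a_1,\ldots,a_{n-1}$, which is nonzero by the claim since $n-1 \geq 1$ — a contradiction. If exactly one entry has modulus $< 2$, rotate the tuple so that it becomes $a_n$; then $M_n = \pm Id$ reads $A_{n-1}\cdots A_1 = \pm A_n^{-1}$, and the first column of $\pm A_n^{-1}$ is $\pm\begin{pmatrix} 0 \\ -1\end{pmatrix}$. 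On the other hand the first column of $A_{n-1}\cdots A_1$ is $\begin{pmatrix} p_{n-1} \\ p_{n-2}\end{pmatrix}$, the continuant of $a_1,\ldots,a_{n-1}$ — all now of modulus $\geq 2$; comparing top coordinates gives $p_{n-1} = 0$, again contradicting the claim (here $n-1 \geq 1$). This exhausts the cases, so there are two distinct indices $i \neq j$ with $|a_i| < 2$ and $|a_j| < 2$.

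The argument is short, and I do not anticipate a genuine obstacle. The only points requiring care are the index bookkeeping in the continuant recursion, the remark that $|b_j| \geq 2$ is exactly what makes the triangle-inequality estimate strict, and — in the second case — the use of circular invariance to push the unique "small" entry out of the sub-product to which the growth estimate is applied.
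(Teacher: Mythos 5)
Your argument is correct. One contextual point: the paper itself does not prove this statement — Théorème \ref{33} is imported from Cuntz--Holm (\cite{CH}, corollaire 3.3) and used as a black box — so there is no internal proof to measure yours against; what you have written is a self-contained substitute for the citation. Checking the details: the identification of the first column of $M_j(b_1,\ldots,b_j)$ with $\bigl(K_j(b_1,\ldots,b_j),K_{j-1}(b_1,\ldots,b_{j-1})\bigr)^{T}$ via the recursion $p_j=b_jp_{j-1}-p_{j-2}$ agrees with the continuant formula recalled in the paper's section \ref{Rp}; the strict growth $1=|p_0|<|p_1|<\cdots<|p_m|$ under $|b_j|\ge 2$ is a correct triangle-inequality induction (the strictness coming exactly from $|p_{j-2}|<|p_{j-1}|$); the conjugation identity $M_n(a_2,\ldots,a_n,a_1)=A_1M_n(a_1,\ldots,a_n)A_1^{-1}$ legitimately reduces the second case to the small entry sitting in last position; and in both cases the contradiction is the vanishing of $p_{n-1}=K_{n-1}(a_1,\ldots,a_{n-1})$, which is indeed the $(2,1)$ entry of $M_n$ (respectively the $(1,1)$ entry of $\pm A_n^{-1}$ comparison), with $n-1\ge 1$ guaranteed by the absence of solutions of size $1$ (proposition \ref{31}). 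This is essentially the classical growth-of-continuants argument underlying the original result, so while your route cannot be called different from the paper's (which has none), it is a faithful and complete reconstruction rather than a novel approach.
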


\begin{remark}

{\rm La constante 2 du théorème ci-dessus est optimale. En effet, on peut montrer que, pour tout $\epsilon \in ]0,2]$, il existe une $\lambda$-quiddité sur $\bR$ dont toutes les composantes sont de module supérieur à $2-\epsilon$ (voir \cite{M2} proposition 3.5).
}

\end{remark}

On aura également besoin dans les sections suivantes de connaître une formule où les coefficients de la matrice $M_{n}(a_{1},\ldots,a_{n})$ sont exprimés à l'aide de déterminants. Pour cela, on rappelle la définition classique ci-dessous :
\\
\\On pose $K_{-1}:=0$ et $K_{0}:=1$. Soient $n \in \bM$ et $(a_{1},\ldots,a_{n}) \in \bC^{n}$. On note \[K_{n}(a_{1},\ldots,a_{n}):=
\left|
\begin{array}{cccccc}
a_1&1&&&\\[4pt]
1&a_{2}&1&&\\[4pt]
&\ddots&\ddots&\!\!\ddots&\\[4pt]
&&1&a_{n-1}&\!\!\!\!\!1\\[4pt]
&&&\!\!\!\!\!1&\!\!\!\!a_{n}
\end{array}
\right|.\] $K_{n}(a_{1},\ldots,a_{n})$ est le continuant de $a_{1},\ldots,a_{n}$. On dispose de l'égalité suivante (voir \cite{CO,MO}). Soient $n \in \bM$ et $(a_{1},\ldots,a_{n}) \in \bC^{n}$. On a :
\[M_{n}(a_{1},\ldots,a_{n})=\begin{pmatrix}
    K_{n}(a_{1},\ldots,a_{n}) & -K_{n-1}(a_{2},\ldots,a_{n}) \\
    K_{n-1}(a_{1},\ldots,a_{n-1})  & -K_{n-2}(a_{2},\ldots,a_{n-1}) 
   \end{pmatrix}.\]

Pour calculer le polynôme continuant, on dispose d'un algorithme simple appelé algorithme d'Euler (ou règle d'Euler), apparu lors de l'étude menée par L. Euler des fractions continues. On peut le décrire de la façon suivante (voir par exemple \cite{CO} section 2). $K_{n}(a_{1},\ldots,a_{n})$ est la somme de tous les produits possibles des $a_{1},\ldots,a_{n}$, dans lesquels un nombre quelconque de paires disjointes de termes consécutifs est supprimé. Chacun de ces produits étant multiplié par -1 puissance le nombre de paires supprimées. On commence donc par le produit $a_{1} \times \ldots \times a_{n}$. Ensuite, on soustrait tous les produits de la forme $a_{1} \times \ldots \times a_{i-1} \times a_{i+2} \times \ldots \times a_{n}$. Puis, on additionne tous les produits possibles de $a_{1},\ldots,a_{n}$, dans lesquels deux paires disjointes de termes consécutifs ont été supprimées et on continue ainsi de suite. 

\begin{examples}

{\rm En appliquant cet algorithme, on obtient :}
\begin{itemize}
\item $K_{2}(a_{1},a_{2})=a_{1}a_{2}-1$;
\item $K_{3}(a_{1},a_{2},a_{3})=a_{1}a_{2}a_{3}-a_{3}-a_{1}$;
\item $K_{4}(a_{1},a_{2},a_{3},a_{4})=a_{1}a_{2}a_{3}a_{4}-a_{3}a_{4}-a_{1}a_{4}-a_{1}a_{2}+1$;
\item $K_{5}(a_{1},a_{2},a_{3},a_{4},a_{5})=a_{1}a_{2}a_{3}a_{4}a_{5}-a_{3}a_{4}a_{5}-a_{1}a_{4}a_{5}-a_{1}a_{2}a_{5}-a_{1}a_{2}a_{3}+a_{5}+a_{3}+a_{1}$.
\\
\end{itemize}

\end{examples}

\subsection{Résultats de classification}
\label{class}

On dispose déjà de résultats de classification des $\lambda$-quiddités irréductibles sur certains sous-ensembles de $\bC$, notamment sur $\bN$ et $\bZ$, obtenus à l'aide du théorème \ref{33}.

\begin{theorem}[Cuntz, \cite{C}, Théorème 3.1]
\label{34}

Les $\lambda$-quiddités irréductibles sur $\mathbb{N}$ sont $(1,1,1)$ et $(0,0,0,0)$.

\end{theorem}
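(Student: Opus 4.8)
The plan is a strong induction on the size $n$, combining the Cuntz--Holm bound (Théorème~\ref{33}) with two elementary splitting identities for the operation $\oplus$. The small sizes come directly from Proposition~\ref{31}: over $\bN$ the only $\lambda$-quiddité of size $3$ is $(1,1,1)$, which is irréductible since a réductible $\lambda$-quiddité has size at least $4$; the $\lambda$-quiddités of size $4$ over $\bN$ are $(0,0,0,0)$, $(1,2,1,2)$ and $(2,1,2,1)$, the last two being réductibles because $(2,1,2,1)\sim(1,1,1)\oplus(1,1,1)$, while $(0,0,0,0)$ is irréductible since its only conceivable decomposition into two pieces of size $3$ would be $(1,1,1)\oplus(1,1,1)=(2,1,2,1)\not\sim(0,0,0,0)$, the only $\lambda$-quiddité of size $3$ over $\bN$ being $(1,1,1)$; and $(0,0)$ (size $2$) is excluded by convention. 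So everything reduces to showing that every $\lambda$-quiddité over $\bN$ of size $n\ge 5$ is réductible.

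For the inductive step I would record two moves, both available once $n\ge 5$. If a $\lambda$-quiddité $(a_1,\dots,a_n)$ over $\bN$ has an entry equal to $1$ whose two cyclic neighbours are both $\ge 1$, then after a rotation placing that entry last it equals $(a_1-1,a_2,\dots,a_{n-2},a_{n-1}-1)\oplus(1,1,1)$, a decomposition into pieces of sizes $n-1\ge 4$ and $3$; if it has two consecutive entries equal to $0$, then after a rotation it equals $(a_1,\dots,a_{n-2})\oplus(0,0,0,0)$, with pieces of sizes $n-2\ge 3$ and $4$. In both cases the first piece is automatically a $\lambda$-quiddité by the property of $\oplus$ recalled after Définition~\ref{22}, so the quiddité is réductible. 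Hence it suffices to prove the combinatorial statement: \emph{every $\lambda$-quiddité over $\bN$ of size $\ge 5$ has two consecutive zeros, or has an entry $1$ whose two cyclic neighbours are both $\ge 1$.}

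To prove this I would take a counterexample $(a_1,\dots,a_n)$ of minimal size ($n\ge 5$). By Théorème~\ref{33} it has at least two entries in $\{0,1\}$; were there no zero, these would be entries equal to $1$ with nonzero neighbours, contradicting the choice, so there is a zero, and, since there are no two consecutive zeros, it is an isolated zero $a_i=0$ with $a_{i-1},a_{i+1}\ge 1$. Writing $M(a):=\left(\begin{smallmatrix}a&-1\\1&0\end{smallmatrix}\right)$, the key computation is the contraction identity $M(a_{i+1})M(0)M(a_{i-1})=-M(a_{i-1}+a_{i+1})$: after rotating so that $1<i<n$, it shows that $T=(a_1,\dots,a_{i-2},\,a_{i-1}+a_{i+1},\,a_{i+2},\dots,a_n)$ is again a $\lambda$-quiddité over $\bN$, of size $n-2$, in which the merged entry is $\ge 2$. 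Contracting an isolated zero creates no new pair of adjacent zeros (the only new adjacency is with the merged entry, which is nonzero), and every entry $1$ of $T$ still has a zero among its neighbours in $T$ — the only neighbours it can have lost are $a_{i-1},a_{i+1}\ge 1$, and its possible new neighbour is the merged entry, which is $\ge 2$. So $T$ is again a counterexample when $n-2\ge 5$, contradicting minimality. The residual cases $n=5$ and $n=6$ I would check by hand: for $n=5$, $T$ has size $3$, so $T=(1,1,1)$, impossible since the merged entry is $\ge 2$; for $n=6$, $T$ has size $4$ and contains an entry $\ge 2$, hence $T\sim(2,1,2,1)$, and splitting the relevant $2$ back as $1+0+1$ gives $(a_1,\dots,a_6)\sim(1,0,1,1,2,1)$, which does have an entry $1$ with two nonzero neighbours — not a counterexample. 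This proves the statement, and together with the base cases it yields exactly $(1,1,1)$ and $(0,0,0,0)$.

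The delicate point is the contraction step: one must first rotate so that the chosen zero is interior before reading off the matrix identity, verify that no new pair of consecutive zeros appears, and track precisely which entries change their neighbourhood (only the two entries flanking the contracted zero) in order to conclude that the excluded configuration is inherited by $T$. Everything else is Proposition~\ref{31}, Théorème~\ref{33}, and a direct use of the explicit formula for $\oplus$.
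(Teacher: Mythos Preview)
This theorem is quoted from \cite{C} without proof in the present paper, so there is no in-paper argument to compare against. Your proof is correct. The point that makes $\bN$ harder than the subgroup case is that Proposition~\ref{32}(i) is unavailable over $\bN$ (the piece $(-a_{i+1},0,a_{i+1},0)$ would leave $\bN$), so a lone zero does not immediately reduce; you correctly replace this by the dichotomy ``two consecutive zeros'' versus ``a $1$ with two positive neighbours'' and run a minimal-counterexample argument via the contraction $M(b)M(0)M(a)=-M(a+b)$, checking that neither forbidden pattern is created (the lost neighbours $a_{i-1},a_{i+1}$ are $\ge 1$, so any $1$ at position $i\pm 2$ keeps its zero neighbour on the other side, and the merged entry $\ge 2$ cannot start a new $00$ block). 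The residual sizes $n=5,6$ are handled correctly: for $n=6$ the contracted $T$ is a size-$4$ $\lambda$-quiddité over $\bN$ with an entry $\ge 2$, hence a rotation of $(1,2,1,2)$ with the merged entry equal to $2$, forcing $a_{i-1}=a_{i+1}=1$ and the original to be $\sim(1,0,1,1,2,1)$, which visibly contains a $1$ with neighbours $1$ and $2$. The contraction identity you use is precisely the one the paper itself employs later in the proof of Théorème~\ref{53}.
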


\begin{theorem}[Cuntz-Holm, \cite{CH} Théorème 6.2]
\label{35}

Les $\lambda$-quiddités irréductibles sur l'anneau $\mathbb{Z}$ sont : \[\{(1,1,1), (-1,-1,-1), (0,m,0,-m), (m,0,-m,0); m \in \mathbb{Z}-\{\pm 1\} \}.\]

\end{theorem}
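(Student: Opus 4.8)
The plan is to determine the irreducible $\lambda$-quiddities over $\bZ$ size by size, the decisive ingredient being a reduction lemma: \emph{any $\lambda$-quiddity $(a_{1},\ldots,a_{n})$ over $\bZ$ with $n \geq 4$ possessing a component equal to $1$ or $-1$ is reducible.} To prove it, I would first use Proposition \ref{opposé} to reduce to the case of a component equal to $1$, and then, using circular permutations, arrange (up to $\sim$) that $a_{n}=1$. Setting $c_{1}=a_{1}-1$, $c_{j}=a_{j}$ for $2 \leq j \leq n-2$ and $c_{n-1}=a_{n-1}-1$ (all lying in $\bZ$), the definition of $\oplus$ gives
\[(c_{1},\ldots,c_{n-1}) \oplus (1,1,1) = (c_{1}+1,c_{2},\ldots,c_{n-2},c_{n-1}+1,1) = (a_{1},\ldots,a_{n}).\]
Since $(a_{1},\ldots,a_{n})$ and $(1,1,1)$ are both $\lambda$-quiddities (Proposition \ref{31}), the property of $\oplus$ recalled after Definition \ref{22} forces $(c_{1},\ldots,c_{n-1})$ to be a $\lambda$-quiddity as well; as it has size $n-1 \geq 3$, this exhibits $(a_{1},\ldots,a_{n})$ as reducible. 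I expect this lemma to be the main obstacle: everything else amounts to assembling results already at hand, whereas here one must produce the explicit $\oplus$-splitting off a copy of $(1,1,1)$.

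Next I would check that each tuple in the announced list is an irreducible $\lambda$-quiddity. The tuples $(1,1,1)$ and $(-1,-1,-1)$ are $\lambda$-quiddities by Proposition \ref{31}, and they are irreducible since a reducible $\lambda$-quiddity has size at least $3+3-2=4$. The tuples $(0,m,0,-m)$ and $(m,0,-m,0)$ are $\lambda$-quiddities by Proposition \ref{31} (they are of the form $(-a,b,a,-b)$ with $ab=0$). For their irreducibility, I would observe that a reducible $\lambda$-quiddity of size $4$ is $\sim (x_{1},x_{2},x_{3}) \oplus (y_{1},y_{2},y_{3})$ with both factors size-$3$ $\lambda$-quiddities, hence both equal to $(1,1,1)$ or $(-1,-1,-1)$; the formula for $\oplus$ shows that such a sum has its second and fourth entries in $\{1,-1\}$, so it always contains a component equal to $\pm 1$. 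As $(0,m,0,-m)$ and $(m,0,-m,0)$ with $m \neq \pm 1$ contain no such component, and $\sim$ preserves the multiset of entries, they cannot be reducible.

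For the converse I would show that every irreducible $\lambda$-quiddity over $\bZ$ lies in the list, treating the sizes in turn. By Proposition \ref{31} there is no solution of size $1$, the only solution of size $2$ is $(0,0)$ (never irreducible), and the solutions of size $3$ are exactly $(1,1,1)$ and $(-1,-1,-1)$. For size $4$, Proposition \ref{31} gives the solutions $(-a,b,a,-b)$ with $ab=0$ and $(a,b,a,b)$ with $ab=2$; the latter always contain a $\pm 1$ (since $ab=2$ in $\bZ$ forces $\left|a\right|$ or $\left|b\right|$ to equal $1$) and are reducible by the lemma, while the former yield $(0,m,0,-m)$ and $(m,0,-m,0)$, which are reducible by the lemma when $m=\pm 1$ and irreducible otherwise. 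Thus only the listed tuples survive in size $4$.

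It remains to rule out irreducible $\lambda$-quiddities of size $n \geq 5$, where Theorem \ref{33} is the key tool: it provides at least two components $a_{i}$ with $\left|a_{i}\right| < 2$, that is $a_{i} \in \{-1,0,1\}$. If one of them is $0$, then Proposition \ref{32}(i) makes the $\lambda$-quiddity reducible; otherwise a component equals $\pm 1$, and the reduction lemma again makes it reducible. In either case no irreducible $\lambda$-quiddity of size $\geq 5$ can exist, which closes the classification and matches the announced list exactly.
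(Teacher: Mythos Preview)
The paper does not prove this theorem; it merely cites it as \cite{CH}, Théorème~6.2, and uses it as a known input (for instance in Corollary~\ref{310} and in the proof of Theorem~\ref{25}). So there is no in-paper proof to compare against.

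Your argument is correct and is in fact the standard route (essentially the Cuntz--Holm proof rephrased with the tools assembled in Section~\ref{pre}): Theorem~\ref{33} forces a component in $\{-1,0,1\}$, a $0$ is handled by Proposition~\ref{32}\,(i), and a $\pm 1$ is handled by splitting off a copy of $(\pm 1,\pm 1,\pm 1)$ via $\oplus$. Two small points worth tightening. First, in your reduction lemma you verify that $(c_{1},\ldots,c_{n-1})$ is itself a $\lambda$-quiddity; this is true but unnecessary, since Definition~\ref{24} only requires the \emph{second} summand $(1,1,1)$ to be a $\lambda$-quiddity. Second, when treating irreducibility in size~$4$, you assert that both factors in $(x_{1},x_{2},x_{3})\oplus(y_{1},y_{2},y_{3})$ are size-$3$ $\lambda$-quiddities; Definition~\ref{24} only demands this of $(y_{1},y_{2},y_{3})$, and you should say explicitly that the $\oplus$-property recalled after Definition~\ref{22} then forces $(x_{1},x_{2},x_{3})$ to be one as well. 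With those one-line justifications added, the proof is complete.
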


Pour ce cas, on dispose d'une description combinatoire élégante des solutions. Avant de donner celle-ci, on définit la notion d'étiquetage admissible introduite dans \cite{CH}. 

\begin{definition}[\cite{CH}, définition 7.1]
\label{36}

Soit $P$ un polygone convexe à $n$ sommets. On considère une triangulation de ce polygone par des diagonales ne se coupant qu'aux sommets et on affecte un entier relatif à chaque triangle. On dit que cette étiquetage est admissible si l'ensemble des triangles étiquetés avec un entier $m \neq \pm 1$ peut être écrit comme une union disjointe d'ensembles contenant chacun deux triangles partageant un côté commun et ayant pour étiquette $a$ et $-a$. À chaque sommet de $P$ on associe un entier $c$ obtenu en sommant les étiquettes des triangles utilisant ce sommet. On parcourt les sommets, à partir de n'importe lequel d'entre eux, dans le sens horaire ou le sens trigonométrique, pour obtenir le $n$-uplet $(c_{1},\ldots,c_{n})$. Ce $n$-uplet est la quiddité de l'étiquetage admissible de $P$.

\end{definition}

\begin{theorem}[Cuntz-Holm, \cite{CH} Théorème 7.3]
\label{37}

Les $\lambda$-quiddités de taille $n$ sur $\mathbb{Z}$ sont des quiddités d'étiquetages admissibles de polygones convexes à $n$ sommets et réciproquement.

\end{theorem}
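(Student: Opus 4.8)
L'outil central serait le dictionnaire entre l'opération $\oplus$ de la définition \ref{22} et le recollement de polygones triangulés le long d'une arête du bord. Plus précisément, je vérifierais d'abord : si $(a_{1},\ldots,a_{n})$ est la quiddité d'un étiquetage admissible (définition \ref{36}) d'un polygone $P$ triangulé et $(b_{1},\ldots,b_{m})$ celle d'un polygone $Q$ triangulé, alors, pour un choix convenable des arêtes identifiées et des orientations, recoller $P$ et $Q$ le long d'une arête du bord produit un polygone à $n+m-2$ sommets, triangulé par la réunion des deux triangulations, dont la quiddité est $(a_{1},\ldots,a_{n})\oplus(b_{1},\ldots,b_{m})$ : seules changent les sommes d'étiquettes aux deux extrémités de l'arête de recollement, où l'on additionne les contributions des deux triangulations, ce qui reproduit la formule de la définition \ref{22}. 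Comme l'ensemble des triangles d'étiquette $\neq\pm1$ du polygone recollé est la réunion disjointe de ceux de $P$ et de ceux de $Q$, l'étiquetage obtenu reste admissible. Je noterais de plus que parcourir les sommets d'un polygone triangulé à partir d'un sommet quelconque, dans un sens ou dans l'autre, revient à remplacer sa quiddité par un $n$-uplet qui lui est équivalent au sens de la définition \ref{23} ; la famille des quiddités d'étiquetages admissibles d'un polygone fixé à $n$ sommets est donc stable par $\sim$.

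\textbf{D'une $\lambda$-quiddité vers un étiquetage.} Pour une $\lambda$-quiddité $(c_{1},\ldots,c_{n})$ sur $\bZ$ (avec $n\geq3$), je raisonnerais par récurrence sur $n$ en utilisant le théorème \ref{35}. Si $(c_{1},\ldots,c_{n})$ est irréductible, elle est équivalente à $(1,1,1)$, $(-1,-1,-1)$, $(0,m,0,-m)$ ou $(m,0,-m,0)$ : les deux premières sont les quiddités du triangle étiqueté par $1$ ou $-1$, et les deux dernières celles d'un quadrilatère dont les deux triangles découpés par une diagonale portent $m$ et $-m$ — étiquetage admissible puisque ces triangles forment une unique paire — de sorte que la stabilité par $\sim$ permet de conclure. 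Si $(c_{1},\ldots,c_{n})$ est réductible, la définition \ref{24} fournit $(c_{1},\ldots,c_{n})\sim(a_{1},\ldots,a_{m})\oplus(b_{1},\ldots,b_{l})$ avec $m,l\geq3$, $m+l=n+2$ et $(b_{1},\ldots,b_{l})$ une $\lambda$-quiddité sur $\bZ$ ; la propriété fondamentale de $\oplus$ rappelée après la définition \ref{22} donne que $(a_{1},\ldots,a_{m})$ est aussi une $\lambda$-quiddité sur $\bZ$, et $m,l<n$. L'hypothèse de récurrence procure deux polygones triangulés étiquetés admissiblement de quiddités $(a_{1},\ldots,a_{m})$ et $(b_{1},\ldots,b_{l})$, et le recollement précédent, joint à la stabilité par $\sim$, conclut.

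\textbf{D'un étiquetage vers une $\lambda$-quiddité.} Réciproquement, soit $P$ un polygone triangulé à $n$ sommets muni d'un étiquetage admissible, de quiddité $(c_{1},\ldots,c_{n})$ ; c'est un $n$-uplet d'entiers, et il reste à voir que c'est une solution de \eqref{a}. Je procéderais par récurrence sur le nombre $t=n-2$ de triangles. Si $t=1$, le triangle ne peut être apparié, donc son étiquette vaut $\pm1$ et la quiddité est $(1,1,1)$ ou $(-1,-1,-1)$, qui sont des $\lambda$-quiddités (proposition \ref{31}). Si $t=2$ et que les deux triangles forment une paire, alors $n=4$ et la quiddité est, à équivalence près, $(0,a,0,-a)$, encore une $\lambda$-quiddité (proposition \ref{31}). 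Dans les autres cas, j'observerais que les deux triangles d'une paire étant adjacents, la seule diagonale de la triangulation qu'une paire donnée puisse enjamber est leur arête commune ; il suffit donc de couper $P$ le long d'une diagonale qui n'est l'arête commune d'aucune paire. Comme il y a $t-1$ diagonales et au plus $\lfloor t/2\rfloor$ arêtes communes de paires, une telle diagonale existe dès que $t\geq3$, ou que $t=2$ sans paire. L'étiquetage se restreint alors en deux étiquetages admissibles sur les deux polygones obtenus, qui comptent strictement moins de triangles ; l'hypothèse de récurrence et la propriété fondamentale de $\oplus$ permettent de conclure.

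\textbf{Difficulté attendue.} Le point le plus délicat me semble être la mise en place rigoureuse du dictionnaire entre $\oplus$ et le recollement de polygones (compatibilité des triangulations, conservation exacte du $n$-uplet obtenu, préservation de l'admissibilité, gestion des orientations et des permutations circulaires), ainsi que, dans la réciproque, l'argument de comptage assurant l'existence d'une diagonale ne scindant aucune paire ; les cas de petite taille ($n=3$ et $n=4$) devront être examinés directement.
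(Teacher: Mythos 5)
Le texte ne démontre pas ce théorème : il est repris tel quel de Cuntz--Holm \cite{CH} (théorème 7.3), la remarque qui le suit précisant seulement que la version originale porte sur $M_{n}(a_{1},\ldots,a_{n})=-Id$ et munit les étiquetages d'un signe. Votre proposition est donc à évaluer comme une preuve indépendante de l'énoncé tel qu'il est formulé ici (version $\pm Id$), et sa structure me paraît correcte : le dictionnaire entre $\oplus$ et le recollement ou le découpage le long d'une diagonale est exact (seules les sommes d'étiquettes aux deux sommets de l'arête de recollement changent, et l'admissibilité passe bien aux morceaux puisque les deux triangles d'une paire, partageant un côté, ne peuvent être séparés par une diagonale autre que ce côté commun) ; le comptage de $t-1$ diagonales contre au plus $\lfloor t/2\rfloor$ arêtes communes de paires fournit bien une diagonale de coupe licite dès que $t\geq 3$ ; et la récurrence via la définition \ref{24} et la propriété fondamentale de $\oplus$ est bien fondée puisque $m+l=n+2$ avec $m,l\geq 3$ force $m,l<n$, tandis que les cas irréductibles sont couverts par le théorème \ref{35} et la stabilité des quiddités d'un étiquetage par $\sim$. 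Deux points méritent d'être signalés : d'une part, votre sens direct s'appuie sur la classification du théorème \ref{35} (elle-même tirée de \cite{CH}), ce qui est légitime dans le cadre de ce texte mais rend l'argument non autonome par rapport à la source ; d'autre part, vous n'obtenez que la version $\pm Id$ énoncée ici, et non l'énoncé signé de \cite{CH} qui distingue les solutions de $M_{n}(a_{1},\ldots,a_{n})=-Id$. Les détails que vous différez (orientations, choix du sommet de départ, petits cas $n=3,4$) relèvent de la vérification directe et ne cachent pas d'obstacle.
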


\begin{examples}

{\rm Voici quelques exemples d'étiquetages admissibles avec leur quiddité :}

$$
\shorthandoff{; :!?}
\xymatrix @!0 @R=0.50cm @C=0.65cm
{
&& 1 \ar@{-}[rrdd]\ar@{-}[lldd]\ar@{-}[ldddd]\ar@{-}[rdddd]&
\\
\\
1 \ar@{-}[rdd]&1&&1& 1 \ar@{-}[ldd]
\\
&&-1
\\
&0\ar@{-}[rr]&& 0
}
\qquad
\qquad
\xymatrix @!0 @R=0.50cm @C=0.65cm
{
&& 1 \ar@{-}[rrdd]\ar@{-}[lldd]&
\\
&&1
\\
1 \ar@{-}[rdd]\ar@{-}[rrrdd]\ar@{-}[rrrr]&&&& 1 \ar@{-}[ldd]
\\
&0&&0
\\
&0\ar@{-}[rr]&& 0
}
\qquad
\qquad
\xymatrix @!0 @R=0.40cm @C=0.5cm
{
&&1 \ar@{-}[rrdd]\ar@{-}[lldd]&
\\
&&1&
\\
2\ar@{-}[dd]\ar@{-}[rrrr]&&&& 3 \ar@{-}[dd]
\\
&1 && 1
\\
1\ar@{-}[rrrr]\ar@{-}[rrrruu]&&&& 0
\\
&&-1&
\\
&&-1 \ar@{-}[lluu]\ar@{-}[rruu]
}
\qquad
\qquad
\xymatrix @!0 @R=0.40cm @C=0.5cm
{
&&1\ar@{-}[lldd] \ar@{-}[rr]&&3\ar@{-}[rrdd]\ar@{-}[dddddd]\ar@{-}[lldddddd]\ar@{-}[rrdddd]&
\\
&&&
\\
5\ar@{-}[dd]\ar@{-}[rrrruu]&&&&&& 1 \ar@{-}[dd]
\\
&&3
\\
1&&&&&& 2
\\
&&&-3
\\
&&1 \ar@{-}[rr]\ar@{-}[lluuuu]\ar@{-}[lluu] &&-2 \ar@{-}[rruu]
}
$$

\noindent {\rm Dans le dernier exemple, tous les triangles sont étiquetés avec 1, à l'exception des deux triangles déjà étiquetés sur la figure. Remarquons que les deux pentagones ont un étiquetage admissible différent aboutissant à la même quiddité.}

\end{examples}

\begin{remark}
{\rm La version initiale du théorème telle qu'elle apparaît dans \cite{CH} concerne uniquement les solutions de $M_{n}(a_{1},\ldots,a_{n})=-Id$ et introduit pour cela une notion de signe de l'étiquetage.
}
\end{remark}

\indent On dispose également du résultat de classification énoncé ci-dessous. Celui-ci concerne le cas des sous-anneaux $\bZ[\alpha]$ avec $\alpha$ un nombre complexe transcendant, c'est-à-dire un nombre complexe qui n'est racine d'aucun polynôme non nul à coefficients dans $\bZ$.

\begin{theorem}[\cite{M2}, Théorème 2.7]
\label{38}

Soit $\alpha$ un nombre complexe transcendant. Les $\lambda$-quiddités irréductibles sur l'anneau $\bZ[\alpha]$ sont : \[\{(1,1,1), (-1,-1,-1), (0,P(\alpha),0,-P(\alpha)), (P(\alpha),0,-P(\alpha),0); P \in \bZ[X]-\{\pm 1\} \}.\]

\end{theorem}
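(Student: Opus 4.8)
Le plan est de démontrer les deux inclusions. Pour le sens direct, on note d'abord que, $\alpha$ étant transcendant, tout élément de $\bZ[\alpha]$ s'écrit $P(\alpha)$ pour un unique $P\in\bZ[X]$, ce qui rend le paramétrage de l'énoncé non ambigu. Que $(1,1,1)$ et $(-1,-1,-1)$ soient des $\lambda$-quiddités, et que $(0,P(\alpha),0,-P(\alpha))$ et $(P(\alpha),0,-P(\alpha),0)$ en soient (étant de la forme $(-a,b,a,-b)$ avec $ab=0$), résulte de la proposition~\ref{31}. Pour l'irréductibilité : une $\lambda$-quiddité réductible est de taille $\geq 4$, puisque $|A\oplus B|=|A|+|B|-2\geq 4$ dès que $|A|,|B|\geq 3$ ; donc $(1,1,1)$ et $(-1,-1,-1)$, de taille $3$, sont irréductibles. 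Et si l'un des $4$-uplets ci-dessus était réductible, on aurait $C\sim A\oplus B$ avec $|A|=|B|=3$ ; comme $B$ est une $\lambda$-quiddité, $A$ en est une aussi (propriété de $\oplus$ rappelée plus haut), d'où $A,B\in\{(1,1,1),(-1,-1,-1)\}$ par la proposition~\ref{31}, et $C$ serait équivalent à l'un de $(2,1,2,1)$, $(0,1,0,-1)$, $(0,-1,0,1)$, $(-2,-1,-2,-1)$ ; or aucun de ces uplets n'est équivalent à $(0,P(\alpha),0,-P(\alpha))$ ni à $(P(\alpha),0,-P(\alpha),0)$ lorsque $P\neq\pm 1$ (on repasse de $P(\alpha)=\pm 1$ à $P=\pm 1$ par transcendance).

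Pour la réciproque, soit $(a_1,\ldots,a_n)$ une $\lambda$-quiddité irréductible sur $\bZ[\alpha]$, et écrivons $a_i=P_i(\alpha)$. Si $n\leq 4$, la proposition~\ref{31} fournit toutes les solutions de \eqref{a}, dont on retient celles à coefficients dans $\bZ[\alpha]$ : $n=1$ est impossible, $n=2$ ne donne que $(0,0)$, $n=3$ donne $(1,1,1)$ ou $(-1,-1,-1)$, et pour $n=4$ on a soit $(-a,b,a,-b)$ avec $ab=0$, soit $(a,b,a,b)$ avec $ab=2$. Dans le premier cas, $\bZ[\alpha]$ étant intègre, $a=0$ ou $b=0$, d'où l'une des deux formes listées, et l'irréductibilité impose au paramètre non nul d'être $\neq\pm 1$ (sinon $C$ est équivalent à $(0,1,0,-1)=(1,1,1)\oplus(-1,-1,-1)$, ou à $(0,-1,0,1)=(-1,-1,-1)\oplus(1,1,1)$, donc réductible). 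Dans le second cas $P_iP_j=2$ dans $\bZ[X]$, donc $P_i$ et $P_j$ sont constants et $C$ est équivalent à $(1,1,1)\oplus(1,1,1)$ ou à $(-1,-1,-1)\oplus(-1,-1,-1)$, ce qui contredit l'irréductibilité. Ainsi toute $\lambda$-quiddité irréductible de taille $\leq 4$ figure dans la liste.

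Il reste à montrer qu'il n'existe pas de $\lambda$-quiddité irréductible de taille $n\geq 5$. L'outil clé est un lemme de réduction locale, conséquence directe de la propriété de $\oplus$ : si $(c_1,\ldots,c_n)$ est une $\lambda$-quiddité avec $c_n=1$, alors $(c_1,\ldots,c_n)=(c_1-1,c_2,\ldots,c_{n-1}-1)\oplus(1,1,1)$, et puisque $(1,1,1)$ est une $\lambda$-quiddité il en va de même de $(c_1-1,c_2,\ldots,c_{n-1}-1)$ ; donc, si $n\geq 4$, cette $\lambda$-quiddité est réductible (de même, via $(-1,-1,-1)$, si une composante vaut $-1$). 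Joint à la proposition~\ref{32} i) (une $\lambda$-quiddité de taille $\geq 5$ contenant $0$ est réductible, $\bZ[\alpha]$ étant un sous-groupe de $(\bC,+)$), ceci montre qu'une $\lambda$-quiddité irréductible de taille $\geq 5$ sur $\bZ[\alpha]$ n'a aucune composante dans $\{-1,0,1\}$. On conclut par une contradiction obtenue via le théorème~\ref{33} après un changement de variable : $\alpha$ étant transcendant et les entrées de $M_n(P_1(X),\ldots,P_n(X))$ étant des polynômes à coefficients entiers qui coïncident avec celles de $\pm Id$ en $X=\alpha$, on a l'identité polynomiale $M_n(P_1(X),\ldots,P_n(X))=\pm Id$ dans $\mathrm{Mat}_2(\bZ[X])$ ; en l'évaluant en un réel $r$ assez grand pour que $|P_i(r)|\geq 2$ pour tout $i$ tel que $\deg P_i\geq 1$ (possible, les $P_i$ étant en nombre fini et chaque polynôme non constant vérifiant $|P(r)|\to+\infty$ quand $r\to+\infty$), le $n$-uplet $(P_1(r),\ldots,P_n(r))$ est une $\lambda$-quiddité sur $\bC$, et le théorème~\ref{33} fournit au moins un indice $i$ avec $|P_i(r)|<2$, pour lequel $P_i$ est nécessairement un entier constant de valeur absolue $<2$, soit $a_i\in\{-1,0,1\}$ : contradiction.

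Le point réellement délicat, qui constitue l'obstacle principal, est cette dernière étape : il faut voir que la transcendance de $\alpha$ découple l'algèbre de l'analyse, en transformant l'unique équation matricielle sur $\bZ[\alpha]\subset\bC$ en une identité polynomiale que l'on peut réévaluer en n'importe quel point, notamment en un grand réel $r$ où les composantes non constantes deviennent trop grandes pour le théorème~\ref{33}. Le reste (lemme de réduction locale, discussion des petites tailles) ne présente pas de difficulté une fois que l'on dispose de la proposition~\ref{31}, de la proposition~\ref{32} et de la propriété de $\oplus$.
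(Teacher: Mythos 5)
Votre démonstration est correcte. Notez d'abord que l'article ne démontre pas cet énoncé : il est cité tel quel de \cite{M2} (Théorème 2.7), il n'y a donc pas de preuve interne à laquelle comparer la vôtre. Votre argument reste toutefois entièrement dans l'esprit des techniques de l'article : traitement des petites tailles par la proposition \ref{31}, réductibilité des solutions de taille $\geq 5$ contenant $0$ par la proposition \ref{32} i), réduction locale des composantes $\pm 1$ via $M_{3}(a,\pm 1,b)=\pm M_{2}(a\mp 1,b\mp 1)$ (le même calcul que celui employé dans la section \ref{preuve25}), et surtout le passage, grâce à la transcendance de $\alpha$, de l'égalité matricielle évaluée en $\alpha$ à une identité polynomiale à coefficients dans $\bZ[X]$, analogue au raisonnement de la proposition \ref{311}. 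La différence substantielle est votre étape finale : l'examen des coefficients constant et dominant du continuant, qui suffit pour $<\alpha>$ où les composantes sont linéaires en $\alpha$, ne fournirait ici que l'existence d'une composante constante sans contrôle sur sa valeur ; vous spécialisez au lieu de cela l'identité polynomiale en un réel $r$ assez grand pour que toute composante non constante soit de module $\geq 2$, puis appliquez le théorème \ref{33} pour forcer une composante constante dans $\{-1,0,1\}$, ce qui contredit la réduction préalable. Cette spécialisation est précisément ce qui rend l'argument viable sur $\bZ[\alpha]$ tout entier, et votre discussion des tailles $\leq 4$ (intégrité de $\bC$ pour $ab=0$, égalité $PQ=2$ dans $\bZ[X]$ forçant $P$ et $Q$ constants) est complète ; je n'y vois pas de lacune.
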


\subsection{Premiers éléments sur les sous-groupes monogènes}
\label{mono}

Nous allons revenir à notre problème initial en donnant quelques résultats de classification des $\lambda$-quiddités irréductibles sur $<w>$ pour certains nombres complexes $w$.

\begin{proposition}
\label{39}

Soit $w$ un nombre complexe vérifiant $\left|w\right| \geq 2$. L'ensemble des $\lambda$-quiddités irréductibles sur $<w>$ est : \[\{(0,kw,0,-kw), (kw,0,-kw,0); k \in \bZ\}.\]

\end{proposition}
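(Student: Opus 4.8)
Le plan est d'exploiter la rigidité du sous-groupe $<w>=\bZ w$ couplée au théorème \ref{33} de Cuntz-Holm. La première observation est que, puisque $\left|w\right| \geq 2$, tout élément $kw$ de $<w>$ vérifie $\left|kw\right| = \left|k\right|\,\left|w\right| \geq 2\left|k\right|$~: le seul élément de $<w>$ de module strictement inférieur à $2$ est donc $0$, et en particulier $1,-1 \notin <w>$. Il en résulte immédiatement, via le théorème \ref{33}, que toute $\lambda$-quiddité $(a_{1},\ldots,a_{n})$ sur $<w>$ possède au moins deux composantes nulles, situées à des positions distinctes.

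Je traiterais ensuite les petites tailles à l'aide de la proposition \ref{31}. Comme $1,-1 \notin <w>$, il n'y a pas de $\lambda$-quiddité sur $<w>$ de taille $1$ ou $3$, et la seule de taille $2$ est $(0,0)$, qui n'est pas irréductible. Pour la taille $4$, la proposition \ref{31} donne deux familles de solutions dans $\bC$~: les $(-a,b,a,-b)$ avec $ab=0$ et les $(a,b,a,b)$ avec $ab=2$. En écrivant $a=kw$ et $b=lw$ avec $k,l \in \bZ$, la relation $ab=2$ imposerait $2=\left|kl\right|\,\left|w\right|^{2} \geq 4\left|kl\right|$, ce qui est impossible. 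Les $\lambda$-quiddités de taille $4$ sur $<w>$ sont donc exactement les $(0,kw,0,-kw)$ et $(kw,0,-kw,0)$ pour $k \in \bZ$ (le cas $k=0$ redonnant $(0,0,0,0)$), et elles sont irréductibles d'après la proposition \ref{32} ii) puisque $1 \notin <w>$.

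Pour les tailles supérieures ou égales à $5$, j'utiliserais l'observation faite plus haut~: une $\lambda$-quiddité sur $<w>$ contient toujours un $0$, donc une telle $\lambda$-quiddité de taille $\geq 5$ est réductible d'après la proposition \ref{32} i). En rassemblant ces trois cas, on obtient que les $\lambda$-quiddités irréductibles sur $<w>$ sont précisément les quadruplets annoncés. L'argument reste assez direct une fois le théorème \ref{33} en main~; le point qui mérite le plus d'attention est le traitement de la taille $4$, c'est-à-dire l'exclusion de la famille $ab=2$ grâce à l'hypothèse $\left|w\right| \geq 2$ et la vérification que les quadruplets restants sont bien des $\lambda$-quiddités irréductibles (via les propositions \ref{31} et \ref{32}).
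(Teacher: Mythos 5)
Votre démonstration est correcte et suit essentiellement la même démarche que celle du texte : le théorème \ref{33} combiné à l'inégalité $\left|kw\right| \geq 2\left|k\right|$ force la présence d'un $0$ dans toute solution, la proposition \ref{32} i) règle les tailles $\geq 5$, et les propositions \ref{31} et \ref{32} ii) (avec l'impossibilité de $ab=2$ dans $<w>$) donnent les tailles $\leq 4$. Votre traitement explicite du cas $ab=2$ via $2=\left|kl\right|\left|w\right|^{2} \geq 4\left|kl\right|$ est simplement une version détaillée de l'argument du texte.
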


\begin{proof}

Soit $(k_{1}w,\ldots,k_{n}w)$ une $\lambda$-quiddité sur $<w>$. D'après le théorème \ref{33}, il existe $i$ dans $[\![1;n]\!]$ tel que $\left|k_{i}w\right| < 2$. Or, $\left|k_{i}w\right|=\left|k_{i}\right|\left|w\right| \geq 2\left|k_{i}\right|$. Donc, $k_{i}=0$. Par la proposition \ref{32} i), les $\lambda$-quiddités sur $<w>$ de taille supérieure à 5 sont réductibles. 
\\
\\Par la proposition \ref{31}, il n'y a pas de $\lambda$-quiddité sur $<w>$ de taille 3 (car $1 \notin <w>$). Donc, par la proposition \ref{32} ii), les $\lambda$-quiddités irréductibles sur $<w>$ sont exactement les $\lambda$-quiddités de taille 4. Or, $ab=2$ n'a pas de solution dans $<w>$. Ainsi, les solutions de \eqref{a} de taille 4 sont de la forme $(0,kw,0,-kw)$ ou $(kw,0,-kw,0)$ (avec $k \in \bZ$). 

\end{proof}

\begin{corollary}
\label{310}

Soit $a \in \bZ$.
\\
\\i) Si $a=0$. $(0,0,0,0)$ est la seule $\lambda$-quiddité irréductible sur $<a>$.
\\
\\ii) Si $a=\pm 1$. L'ensemble des $\lambda$-quiddités irréductibles sur $<a>$ est : \[\{(1,1,1), (-1,-1,-1), (0,m,0,-m), (m,0,-m,0); m \in \bZ-\{\pm 1\} \}.\]

\noindent iii) Si $\left|a\right| \geq 2$. L'ensemble des $\lambda$-quiddités irréductibles sur $<a>$ est : \[\{(0,ka,0,-ka), (ka,0,-ka,0); k \in \bZ\}.\]

\end{corollary}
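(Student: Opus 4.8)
The plan is to handle the three cases separately, in each case identifying the group $<a>$ explicitly and reducing to a result already established above.

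For i), when $a=0$ one has $<0>=\{0\}$, so every $n$-uplet on $<0>$ is $(0,\ldots,0)$; by the continuant formula (or by computing powers of $\left(\begin{smallmatrix}0&-1\\1&0\end{smallmatrix}\right)$) such a tuple is a $\lambda$-quiddité exactly when $n$ is even. The tuple $(0,0)$ is excluded by convention. For $n=4$, Proposition \ref{31} gives $(0,0,0,0)$ as the only size-$4$ $\lambda$-quiddité on $\{0\}$, and I would check it is irreducible by noting that a reduction would express it as a sum of two $\lambda$-quiddités of sizes $l,l'\geq 3$ with $l+l'=6$, hence $l=l'=3$, which is impossible since there is no $\lambda$-quiddité of size $3$ on $\{0\}$ (Proposition \ref{31}, as $\pm 1\notin\{0\}$). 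For $n\geq 5$, every $\lambda$-quiddité on $\{0\}$ contains $0$, hence is reducible by Proposition \ref{32} i). So $(0,0,0,0)$ is the unique irreducible $\lambda$-quiddité on $<0>$.

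For ii), when $a=\pm 1$ one has $<a>=\bZ a=\bZ$, so the statement is exactly the Cuntz--Holm classification over $\bZ$ recalled in Theorem \ref{35}, and there is nothing further to do. For iii), when $\left|a\right|\geq 2$ I would apply Proposition \ref{39} with $w=a$: this is legitimate since $\left|w\right|\geq 2$ and $<a>=\bZ a$, and it gives immediately the announced list $\{(0,ka,0,-ka),(ka,0,-ka,0);\,k\in\bZ\}$.

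I expect no genuine obstacle: the corollary is essentially a repackaging of Theorem \ref{35} and Proposition \ref{39}, the only part needing an independent (and very short) argument being the case $a=0$, where irreducibility must be excluded for sizes $\geq 5$ (via Proposition \ref{32} i)) and for size $4$ (via the non-existence of size-$3$ $\lambda$-quiddités on the singleton $\{0\}$).
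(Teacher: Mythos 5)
Your proposal is correct and follows essentially the same route as the paper: case i) is handled by restricting to $\{0\}$ and combining Propositions \ref{31} and \ref{32}, case ii) is exactly Théorème \ref{35}, and case iii) is a direct application of Proposition \ref{39}. The only cosmetic difference is that for the irreducibility of $(0,0,0,0)$ you re-run the size-count argument inline, whereas the paper simply cites Proposition \ref{32} ii), which applies since $1\notin\{0\}$.
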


\begin{proof}

i) $<a>=\{0\}$. Par la proposition \ref{32} i), les $\lambda$-quiddités sur $<a>$ de taille supérieure à 5 sont réductibles. Par la proposition \ref{31}, les seules $\lambda$-quiddités sur $<a>$ de taille inférieure à 4 sont $(0,0)$ et $(0,0,0,0)$. Par la proposition \ref{32} ii), $(0,0,0,0)$ est la seule $\lambda$-quiddité irréductible sur $<a>$.
\\
\\ii) $<a>=\bZ$. Le résultat est déjà connu (voir théorème \ref{35}).
\\
\\iii) C'est une conséquence de la proposition précédente.

\end{proof}

\noindent On va maintenant s'intéresser aux cas où $w$ est un nombre complexe transcendant.

\begin{proposition}
\label{311}

Soit $\alpha$ un nombre complexe transcendant. L'ensemble des $\lambda$-quiddités irréductibles sur $<\alpha>$ est : \[\{(0,k\alpha,0,-k\alpha), (k\alpha,0,-k\alpha,0); k \in \bZ\}.\]

\end{proposition}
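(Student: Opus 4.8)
The plan is to mimic the structure of the proof of Proposition \ref{39}, since the only property of $w$ with $|w| \geq 2$ used there is that $k_i w$ has modulus $<2$ only when $k_i = 0$, together with the absence of $\lambda$-quiddités of size $3$ and the non-solvability of $ab = 2$ on $<w>$. For a transcendental $\alpha$, none of these three facts is automatic from a modulus bound, so each must be re-established algebraically.

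First I would reduce to small sizes exactly as before: given a $\lambda$-quiddité $(k_1\alpha,\ldots,k_n\alpha)$ on $<\alpha>$ with $n \geq 5$, I want to show it is reducible. The natural route is to show that some $k_i = 0$ and then invoke Proposition \ref{32}(i). By Theorem \ref{33} there exist two indices $i \neq j$ with $|k_i \alpha| < 2$ and $|k_j \alpha| < 2$. If $|\alpha| \geq 2$ this forces $k_i = k_j = 0$ and we are done; but $\alpha$ need not have large modulus. So instead I would argue via the continuant: the defining equation $M_n(k_1\alpha,\ldots,k_n\alpha) = \pm Id$ unpacks, using the formula with the $K_n$'s and Euler's rule, into polynomial identities in $\alpha$ with integer coefficients — each entry of $M_n$ is a polynomial in $\alpha$ whose coefficients are (signed sums of products of) the $k_i$. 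Since $\alpha$ is transcendental, these polynomials must be identically equal to the corresponding entries of $\pm Id$, i.e. every coefficient vanishes except the constant terms matching $\pm 1$. The top-degree coefficient of $K_n(k_1\alpha,\ldots,k_n\alpha)$ is $k_1 k_2 \cdots k_n$, so this must equal $0$ (the $(1,1)$ entry of $\pm Id$ has no $\alpha^n$ term when $n \geq 2$), hence some $k_i = 0$. That handles $n \geq 5$ through Proposition \ref{32}(i).

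Next, for $n = 3$: a $\lambda$-quiddité on $<\alpha>$ would by Proposition \ref{31} have to be $(1,1,1)$ or $(-1,-1,-1)$, which requires $\pm 1 \in <\alpha> = \mathbb{Z}\alpha$, i.e. $\pm 1 = k\alpha$ for some $k \in \mathbb{Z}$, i.e. $\alpha$ rational — contradicting transcendence. So there is no size-$3$ solution, and by Proposition \ref{32}(ii) every size-$4$ $\lambda$-quiddité on $<\alpha>$ is irreducible. Finally, for $n = 4$: by Proposition \ref{31} the solutions over $\bC$ are $(-a,b,a,-b)$ with $ab = 0$ and $(a,b,a,b)$ with $ab = 2$; in the second family $a = k\alpha$, $b = l\alpha$ with $kl\alpha^2 = 2$, again impossible since $\alpha^2$ would be rational. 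Hence the size-$4$ solutions on $<\alpha>$ are exactly $(0, k\alpha, 0, -k\alpha)$ and $(k\alpha, 0, -k\alpha, 0)$ for $k \in \mathbb{Z}$, and combining with the size-$3$ and $n \geq 5$ analyses gives the claimed list.

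The main obstacle is the $n \geq 5$ case: unlike in Proposition \ref{39}, I cannot simply read off $k_i = 0$ from a modulus inequality, so I must pass through the continuant/transcendence argument to extract the vanishing of the leading coefficient $k_1 \cdots k_n$. Once that is in place the remaining steps are essentially identical to the proof of Proposition \ref{39}, with every appeal to "$ab = 2$ has no solution" or "no size-$3$ $\lambda$-quiddité" justified by the observation that $\alpha$, $\alpha^2$, and $1/\alpha$ are all irrational. It is worth noting that this proposition is in fact a special case of Theorem \ref{38} (take the transcendental $\alpha$ there and restrict to the degree-$\leq 1$, constant-free polynomials), so an alternative would be to cite \cite{M2}; but the self-contained argument above is short and keeps the section uniform with Proposition \ref{39}.
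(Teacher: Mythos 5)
Votre preuve est correcte et suit essentiellement la même démarche que celle du papier : formule des continuants, algorithme d'Euler et transcendance de $\alpha$ pour forcer un $k_{i}=0$ (vous utilisez le coefficient dominant de l'entrée $(1,1)$, soit $K_{n}$, là où le papier utilise l'entrée hors diagonale $K_{n-1}$ avec une discussion de parité — différence inessentielle), puis propositions \ref{31} et \ref{32} pour les tailles $3$, $4$ et $\geq 5$. Seule votre remarque incidente selon laquelle l'énoncé serait un « cas particulier » du théorème \ref{38} est à prendre avec précaution : l'irréductibilité dépend de l'ensemble ambiant, et une décomposition sur $\bZ[\alpha]$ pourrait a priori utiliser des entrées hors de $\bZ\alpha$, donc la restriction n'est pas automatique ; mais cet aparté ne joue aucun rôle dans votre argument.
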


\begin{proof}

Soient $n \in \bN$ $n \geq 2$ et $(k_{1}\alpha,\ldots,k_{n}\alpha)$ une $\lambda$-quiddité sur $<\alpha>$. il existe $\epsilon$ dans $\{-1, 1\}$ tel que :
\[M_{n}(k_{1}\alpha,\ldots,k_{n}\alpha)=\begin{pmatrix}
    K_{n}(k_{1}\alpha,\ldots,k_{n}\alpha) & -K_{n-1}(k_{2}\alpha,\ldots,k_{n}\alpha) \\
    K_{n-1}(k_{1}\alpha,\ldots,k_{n-1}\alpha)  & -K_{n-2}(k_{2}\alpha,\ldots,k_{n-1}\alpha) 
   \end{pmatrix}=\epsilon Id.\]
	
\noindent Supposons par l'absurde $n$ impair. Par l'algorithme d'Euler, $K_{n-1}(k_{1}\alpha,\ldots,k_{n-1}\alpha)$ est un polynôme en $\alpha$ à coefficients dans $\bZ$ (dépendant des $k_{i}$) dont le coefficient constant est $\pm 1$ (car $n-1$ est pair). Or, $P(\alpha)=K_{n-1}(k_{1}\alpha,\ldots,k_{n-1}\alpha)=0$. Ainsi, comme $\alpha$ est transcendant, $P$ est un polynôme nul. Ceci est absurde car le coefficient constant de $P$ est $\pm 1$.
\\
\\On en déduit que $n$ est pair.
\\
\\Par l'algorithme d'Euler, $K_{n-1}(k_{1}\alpha,\ldots,k_{n-1}\alpha)$ est un polynôme en $\alpha$ à coefficients dans $\bZ$ (dépendant des $k_{i}$) dont le coefficient dominant est $\prod_{i=1}^{n-1} k_{i}$. Or, $P(\alpha)=K_{n-1}(k_{1}\alpha,\ldots,k_{n-1}\alpha)=0$. Ainsi, comme $\alpha$ est transcendant, $P$ est un polynôme nul. Donc, $\prod_{i=1}^{n-1} k_{i}=0$, c'est-à-dire qu'il existe $i$ dans $[\![1;n-1]\!]$ tel que $k_{i}=0$. Par la proposition \ref{32} i), les $\lambda$-quiddités sur $<\alpha>$ de taille supérieure à 5 sont réductibles.
\\
\\Comme $\alpha$ est transcendant, $1 \notin <\alpha>$ et $ab=2$ n'a pas de solution dans $<\alpha>$. Donc, par la proposition \ref{32} ii), les $\lambda$-quiddités irréductibles sur $<\alpha>$ sont de la forme $(0,k\alpha,0,-k\alpha)$ ou $(k\alpha,0,-k\alpha,0)$ ($k \in \bZ$).

\end{proof}

Cette proposition couvre de très nombreux cas. En effet, l'ensemble des nombres complexes algébriques est dénombrable (voir par exemple \cite{Ca} et \cite {G} corollaire III.57) alors que $\bC$ est indénombrable (Théorème de Cantor, voir \cite{G} corollaire III.54). De plus, il existe de nombreux résultats permettant de considérer des nombres transcendants précis. Par exemple :
\begin{itemize}
\item Nombres de Liouville (1844) : Si $(a_{n})_{n \in \bN}$ est une suite d'entiers compris entre 0 et 9 avec une infinité de terme non nuls alors la série numérique de terme générale $\frac{a_{n}}{10^{n!}}$ est convergente et $\sum_{n=0}^{+\infty} \frac{a_{n}}{10^{n!}}$ est transcendant (voir \cite{L} et \cite{G} exercice III.11).
\\
\item Théorème de Hermite (1873) : $e$ est transcendant (voir \cite{H} et \cite{G} Théorème III.59).
\\
\item Théorème de Lindemann (1882) : $\pi$ est transcendant (voir \cite{G} Théorème III.60).
\\
\item Théorème de Gelfond-Schneider (1934) : Si $a$ est un nombre algébrique différent de 0 et 1 et si $b$ est un nombre algébrique irrationnel alors $a^{b}$ est transcendant, où $a^{b}$ est à prendre au sens ${\rm exp}(b{\rm log}(a))$ avec ${\rm log}(a)$ n'importe quelle détermination du logarithme complexe de $a$ (voir \cite{Ge}).
\\
\end{itemize}

Nous allons nous intéresser dans les sections suivantes aux cas de certains nombres complexes algébriques en démontrant les deux théorèmes présentés dans la section \ref{RP}.

\subsection{Solutions dont toutes les composantes sont identiques}

Parmi les $\lambda$-quiddités irréductibles listées dans le théorème \ref{25}, on constate la présence de solutions dont toutes les composantes sont identiques. On va chercher ici, pour un entier $n$ fixé, quels sont les nombres complexes $z$ pour lesquels le $n$-uplet $(z,\ldots,z)$ est une $\lambda$-quiddité irréductible sur tout sous-groupe de $\mathbb{C}$ contenant $z$.

\begin{proposition}

Soient $n \geq 2$ et $z \in \mathbb{C}$. 
\\
\\i) Le $n$-uplet $(z,\ldots,z)$ est une $\lambda$-quiddité sur $\mathbb{C}$ si et seulement s'il existe $1 \leq k \leq n-1$ tel que $z=2cos\left(\frac{k\pi}{n}\right)$.
\\
\\ii) Soient $1 \leq k \leq n-1$ et $G$ un sous-groupe de $\mathbb{C}$ contenant $z:=2cos\left(\frac{k\pi}{n}\right)$. $(z,\ldots,z) \in G^{n}$ est irréductible sur $G$ si et seulement si $n=4$ et $k=2$ ou $n \geq 3$ et $k$ et $n$ premiers entre eux.

\end{proposition}

\begin{proof}

i) Supposons que le $n$-uplet $(z,\ldots,z)$ est une $\lambda$-quiddité sur $\mathbb{C}$. On a $K_{n-1}(z,\ldots,z)=0$. Or, $K_{n-1}(X,\ldots,X)=U_{n-1}\left(\frac{X}{2}\right)$, avec $U_{n}$ le polynôme de Tchebychev de seconde espèce de degré $n$ (voir \cite{CO} section 5). Donc, $\frac{z}{2}$ est une racine de $U_{n-1}$. Or, $U_{n-1}$ est un polynôme de degré $n-1$ qui possèdent $n-1$ racines réelles distinctes : les  ${\rm cos}\left(\frac{k\pi}{n}\right)$ avec $1 \leq k \leq n-1$. En effet, $U_{n-1}$ est un polynôme de degré $n-1$ donc il a au plus $n-1$ racines complexes et, en utilisant la relation $U_{n}({\rm cos}(\theta))=\frac{{\rm sin}((n+1)\theta)}{{\rm sin}(\theta)}$ pour tout $\theta \not\equiv 0 [\pi]$, on a $U_{n-1}\left({\rm cos}\left(\frac{k\pi}{n}\right)\right)=\frac{{\rm sin}(k\pi)}{{\rm sin}(\frac{k\pi}{n})}=0$. Comme ${\rm cos}\left(\frac{k\pi}{n}\right) \neq {\rm cos}\left(\frac{l\pi}{n}\right)$ lorsque $1 \leq k <l \leq n-1$, on obtient le résultat souhaité. Par conséquent, $z$ a la forme annoncée.
\\
\\Soit $z:=2{\rm cos}\left(\frac{k\pi}{n}\right)$ avec $1 \leq k \leq n-1$. Par ce qui précède, $K_{n-1}(z,\ldots,z)=0$. De plus, on a :
\[K_{n-2}(z,\ldots,z)=U_{n-2}\left({\rm cos}\left(\frac{k\pi}{n}\right)\right)=\frac{{\rm sin}\left(k\pi-\frac{k\pi}{n}\right)}{{\rm sin}\left(\frac{k\pi}{n}\right)}=(-1)^{k+1}\frac{{\rm sin}\left(\frac{k\pi}{n}\right)}{{\rm sin}\left(\frac{k\pi}{n}\right)}=(-1)^{k+1};\]
\[K_{n}(z,\ldots,z)=U_{n}\left({\rm cos}\left(\frac{k\pi}{n}\right)\right)=\frac{{\rm sin}\left(k\pi+\frac{k\pi}{n}\right)}{{\rm sin}\left(\frac{k\pi}{n}\right)}=(-1)^{k}\frac{{\rm sin}\left(\frac{k\pi}{n}\right)}{{\rm sin}\left(\frac{k\pi}{n}\right)}=(-1)^{k}.\]
\noindent Par conséquent, $M_{n}(z,\ldots,z)=(-1)^{k}Id$.
\\
\\ii) Soit $z:=2{\rm cos}\left(\frac{k\pi}{n}\right)$ avec $1 \leq k \leq n-1$. Soit $G$ un sous-groupe de $\mathbb{C}$ contenant $z$. 
\\
\\Supposons que $k$ et $n$ ne sont pas premiers entre eux. Soit $u$ le pgcd de $k$ et $n$. Il existe deux entiers premiers entre eux $k',n'$ tels que $n=n'u$ et $k=k'u$. On a donc $z=2{\rm cos}\left(\frac{k'\pi}{n'}\right)$. De plus, comme $0<k<n$, on a $n' \neq 1$. On distingue deux cas :
\begin{itemize}
\item Supposons que $n'=2$. Comme $0<k<n$, on a nécessairement $k=u$ et $z=0$. Si $u=2$ alors $n=4$ et la solution $(z,\ldots,z) \in G^{n}$ est irréductible. Si $u>2$ alors $n \geq 6$ et on peut réduire la solution avec la $\lambda$-quiddité $(0,0,0,0)$ (proposition \ref{32} i)).
\item Supposons que $n'>2$. Par i), le $n'$-uplet $(z,\ldots,z)$ est une $\lambda$-quiddité sur le groupe $G$. Comme $3 \leq n' \leq \frac{n}{2} \leq n-1$, on peut utiliser cette $\lambda$-quiddité de taille $n'$ pour réduire notre $\lambda$-quiddité de taille $n$.
\\
\end{itemize}

\noindent Supposons maintenant que $k$ et $n$ sont premiers entre eux. Si $n=2$ alors $k=1$, $z=0$ et la solution est réductible. Si $n=3$ alors $k \in \{1,2\}$, $z=\pm 1$ et la solution est irréductible. On suppose maintenant $n \geq 4$ et on suppose par l'absurde que $(z,\ldots,z) \in G^{n}$ est réductible. Il existe une solution sur le groupe $G$ de la forme $(a,z,\ldots,z,b)$ de taille $3 \leq l \leq n-1$. On a $K_{l-2}(z,\ldots,z)= \pm 1$. Donc, 
\[U_{l-2}\left({\rm cos}\left(\frac{k\pi}{n}\right)\right)=\frac{{\rm sin}\left(\frac{(l-1)k\pi}{n}\right)}{{\rm sin}\left(\frac{k\pi}{n}\right)}=\pm 1.\]
\noindent Ainsi, ${\rm sin}\left(\frac{(l-1)k\pi}{n}\right)=\pm {\rm sin}\left(\frac{k\pi}{n}\right)$, ce qui implique $\frac{lk\pi}{n} \equiv 0 [\pi]$ ou $\frac{(l-2)k\pi}{n} \equiv 0 [\pi]$. On distingue les deux cas :
\begin{itemize}
\item Supposons que $\frac{lk\pi}{n} \equiv 0 [\pi]$. On a $lk \equiv 0 [n]$, c'est-à-dire $n$ divise $lk$. Comme $k$ et $n$ sont premiers entre eux, $n$ divise $l$, par le lemme de Gauss. Comme $l <n$, ceci est absurde.
\item Supposons que $\frac{(l-2)k\pi}{n} \equiv 0 [\pi]$. On a $(l-2)k \equiv 0 [n]$, c'est-à-dire $n$ divise $(l-2)k$. Comme $k$ et $n$ sont premiers entre eux, $n$ divise $l-2$, par le lemme de Gauss. Comme $l <n$, ceci est absurde.
\end{itemize}
\noindent Donc, $(z,\ldots,z) \in G^{n}$ est irréductible.

\end{proof}

\begin{examples}
{\rm \begin{itemize}
\item Soit $\tau:=\frac{1+\sqrt{5}}{2}$ le nombre d'or. On a $\tau=2{\rm cos}\left(\frac{\pi}{5}\right)$. Par conséquent, $(\tau,\tau,\tau,\tau,\tau)$ est une $\lambda$-quiddité irréductible sur $<\tau>$.
\item $\left(2{\rm cos}\left(\frac{5\pi}{7}\right),2{\rm cos}\left(\frac{5\pi}{7}\right),2{\rm cos}\left(\frac{5\pi}{7}\right),2{\rm cos}\left(\frac{5\pi}{7}\right),2{\rm cos}\left(\frac{5\pi}{7}\right),2{\rm cos}\left(\frac{5\pi}{7}\right),2{\rm cos}\left(\frac{5\pi}{7}\right)\right)$ est une $\lambda$-quiddité irréductible sur $\mathbb{Z}\left[2{\rm cos}\left(\frac{5\pi}{7}\right)\right]$.
\end{itemize}
}
\end{examples}

\begin{remarks}
{\rm i) Pour $n=4$, on obient les $\lambda$-quiddités irréductibles $(0,0,0,0)$, $\pm (\sqrt{2},\sqrt{2},\sqrt{2},\sqrt{2})$. Pour $n=3$, on obtient les solutions irréductibles $\pm (1,1,1)$. Pour $n=6$, on obtient les $\lambda$-quiddités irréductibles $\pm (\sqrt{3},\sqrt{3},\sqrt{3},\sqrt{3},\sqrt{3},\sqrt{3})$. On retrouve donc les solutions dont toutes les composantes sont identiques présentes dans le théorème \ref{25}. Cependant, on n'utilisera pas les résultats de cette sous-partie dans la preuve de ce dernier.
\\
\\ii) Lorsque l'on travaille sur les anneaux $\mathbb{Z}/N\mathbb{Z}$, on dispose de très nombreux résultats sur les $\lambda$-quiddités dont toutes les composantes sont identiques (voir notamment \cite{M4,M5}).
}
\end{remarks}

\section{Démonstration du théorème \ref{25}}
\label{preuve25}

L'objectif de cette section est de démontrer le théorème \ref{25}, c'est-à-dire de considérer les cas $w=\sqrt{k}$ avec $k \in \bN$. Le point i) correspond exactement au point i) du corollaire \ref{310}. De même, le point ii) est scrupuleusement identique au théorème \ref{35}. Quant au point v), il découle directement de la proposition \ref{39}. Il nous reste donc à considérer les cas des sous-groupes $<\sqrt{k}>$ avec $k=2$ et $k=3$.
\\
\\Dans ce qui suit, si $(a_{1},\ldots a_{n})$ est une solution de \eqref{a} on considérera $a_{1}$ et $a_{n}$ comme des éléments consécutifs (ceci à cause de l'invariance par permutations circulaires des solutions de \eqref{a}).

\subsection{Le cas k=2}

Soient $m$ un entier strictement positif et $(a_{1},\ldots,a_{m})$ une solution de \eqref{a}, avec, pour tout $j$ dans $[\![1;m]\!]$, $a_{j}=k_{j}\sqrt{2}$ et $k_{j} \in \mathbb{Z}$. Il existe $\epsilon \in \{\pm 1 \}$ tel que : \[\epsilon Id=M_{m}(a_{1},\ldots,a_{m})=\begin{pmatrix}
    K_{m}(a_{1},\ldots,a_{m}) & -K_{m-1}(a_{2},\ldots,a_{m}) \\
    K_{m-1}(a_{1},\ldots,a_{m-1})  & -K_{m-2}(a_{2},\ldots,a_{m-1}) 
   \end{pmatrix}.\]
\noindent Supposons $m$ impair. On a $K_{m}(a_{1},\ldots,a_{m})=\epsilon \in \mathbb{Z}$.
\\
\\Par l'algorithme d'Euler, $K_{m}(a_{1},\ldots,a_{m})$ est une somme dont chacun des termes est un produit d'un nombre impair de $a_{j}$ (et éventuellement de -1). On regroupe les $\sqrt{2}$ présents dans chaque terme. Chacun des termes de la somme est alors le produit de $\sqrt{2}$ à une puissance impaire multiplié par des entiers. En factorisant par $\sqrt{2}$, on peut donc écrire $K_{m}(a_{1},\ldots,a_{m})$ comme le produit de $\sqrt{2}$ par un entier.
\\
\\Ainsi, $K_{m}(a_{1},\ldots,a_{m}) \in \mathbb{Z} \cap \sqrt{2}\mathbb{Z}$, c'est-à-dire $K_{m}(a_{1},\ldots,a_{m})=0$. Ceci est absurde puisque $\epsilon \in \{\pm 1 \}$.
\\
\\Donc, il n'y a pas de $\lambda$-quiddité de taille impaire sur $<\sqrt{2}>$.
\\
\\Supposons maintenant $m$ pair supérieur à 4 avec $m=2n$. Notre objectif est de montrer que si tous les $k_{i}$ sont non nuls alors deux $k_{i}$ consécutifs sont égaux à 1 ou à -1.
\\
\\Par l'algorithme d'Euler, $K_{2n}(k_{1}\sqrt{2},k_{2}\sqrt{2},\ldots,k_{2n-1}\sqrt{2},k_{2n}\sqrt{2})$ est une somme de produits des $k_{j}\sqrt{2}$ (et éventuellement de -1). Dans chaque terme d'une de ces sommes, pour chaque $j$ impair $k_{j}$ est suivi d'un $k_{l}$ avec $l$ pair. On fait alors la manipulation suivante :\[k_{j}\sqrt{2}k_{l}\sqrt{2}=2k_{j}k_{l}=k_{j}(2k_{l}).\]
\noindent Avec cette manipulation, on a \[K_{2n}(k_{1}\sqrt{2},k_{2}\sqrt{2},\ldots,k_{2n-1}\sqrt{2},k_{2n}\sqrt{2})=K_{2n}(k_{1},2k_{2},\ldots,k_{2n-1},2k_{2n}).\] 

\noindent On procède de façon analogue pour $K_{2n-2}(k_{2}\sqrt{2},\ldots,k_{2n-1}\sqrt{2})$ (puisque pour pour chaque $j$ impair $k_{j}$ est précédé d'un $k_{l}$ avec $l$ pair). 
\\
\\$K_{2n-1}(k_{1}\sqrt{2},k_{2}\sqrt{2},\ldots,k_{2n-1}\sqrt{2})$ est une somme de produits des $k_{j}\sqrt{2}$ (et éventuellement de -1). Pour chaque terme de la somme, on a deux choix :
\begin{itemize}
\item le terme est de la forme $k_{j}\sqrt{2}$ avec $j$ impair;
\item le terme est un produit de $2u+1$ éléments de la forme $k_{j}\sqrt{2}$ avec $u+1$ indices $j$ impair. Pour chaque $m$ pair $k_{m}$ est précédé d'un $k_{l}$ avec $l$ impair. On fait alors la manipulation suivante :\[k_{l}\sqrt{2}k_{m}\sqrt{2}=2k_{l}k_{m}=k_{l}(2k_{m}).\]
\end{itemize}
\noindent Cela donne $0=K_{2n-1}(k_{1}\sqrt{2},k_{2}\sqrt{2},\ldots,k_{2n-1}\sqrt{2})=\sqrt{2}K_{2n-1}(k_{1},2k_{2},\ldots,k_{2n-1})$. En particulier, on a $0=K_{2n-1}(k_{1},2k_{2},\ldots,k_{2n-1})$. On procède de façon analogue pour $K_{2n-1}(k_{2}\sqrt{2},\ldots,k_{2n}\sqrt{2})$.
\\
\\Donc, $(k_{1},2k_{2},\ldots,k_{2n-1},2k_{2n})$ est une $\lambda$-quiddité sur $\mathbb{Z}$.
\\
\\On suppose que pour tout $i \in [\![1;2n]\!]$, $k_{i} \neq 0$. Ainsi, par le théorème \ref{33},  il existe $i \in [\![1;2n]\!]$ tel que $k_{i}=\pm 1$ et $i$ impair. 
\\
\\On montre, par un simple calcul, que $M_{3}(a,1,b)=M_{2}(a-1,b-1)$ et $M_{3}(a,-1,b)=-M_{2}(a+1,b+1)$. Si on "réduit" de cette façon tous les $\pm 1$ présents dans $(k_{1},2k_{2},\ldots,k_{2n-1},2k_{2n})$ on obtient une nouvelle solution. Cette nouvelle $\lambda$-quiddité ne peut contenir que des éléments appartenant à la liste suivante :
\begin{itemize}
\item  $k_{i}$ avec $i$ impair et $\left|k_{i}\right| \geq 2$;
\item $2k_{i}$ avec $i$ pair;
\item $2k_{i}-1$ avec $i$ pair;
\item $2k_{i}+1$ avec $i$ pair;
\item $2k_{i}-2$ avec $i$ pair;
\item $2k_{i}+2$ avec $i$ pair.
\\
\end{itemize}

\noindent Par le théorème \ref{33}, un de ces éléments est nécessairement égal à 0, 1 ou -1. Cela ne peut pas être un des éléments des deux premières catégories. S'il existe $i \in [\![1;2n]\!]$ tel que $2k_{i}-1 \in \{0, 1, -1\}$. On a nécessairement $2k_{i}-1=1$, c'est-à-dire $k_{i}=1$. De plus, pour arriver à $2k_{i}-1$ lors de la réduction on a impérativement utilisé un 1 adjacent à $2k_{i}$. Ainsi, on a deux $k_{i}$ consécutifs égaux à 1. On procède de façon analogue s'il existe $i \in [\![1;2n]\!]$ tel que $2k_{i}+1 \in \{0, 1, -1\}$. Supposons maintenant qu'il existe $i \in [\![1;2n]\!]$ tel que $2k_{i}-2 \in \{0, 1, -1\}$. On a nécessairement $2k_{i}-2=0$, c'est-à-dire $k_{i}=1$. De plus, pour arriver à $2k_{i}-2$ lors de la réduction on a obligatoirement utilisé un 1 adjacent à gauche et à droite de $2k_{i}$. Ainsi, on a deux $k_{i}$ consécutifs égaux à 1. On procède de façon analogue s'il existe $i \in [\![1;2n]\!]$ tel que $2k_{i}+2 \in \{0, 1, -1\}$.
\\
\\Ainsi, si $(a_{1},\ldots,a_{n})$ est une $\lambda$-quiddité sur $<\sqrt{2}>$ alors $n$ est pair et une des deux conditions suivantes est obligatoirement vérifiée :
\begin{itemize}
\item un des $a_{i}$ est nul;
\item deux $a_{i}$ consécutifs sont égaux et valent $\sqrt{2}$ ou $-\sqrt{2}$.
\\
\end{itemize}

\noindent Par la proposition \ref{31}, $(\sqrt{2},\sqrt{2},\sqrt{2},\sqrt{2})$ et $(-\sqrt{2},-\sqrt{2},-\sqrt{2},-\sqrt{2})$ sont des $\lambda$-quiddités sur $<\sqrt{2}>$. Aussi, on peut les utiliser pour réduire toutes les solutions de \eqref{a} de taille supérieure à 5 dont deux éléments consécutifs sont égaux et valent $\sqrt{2}$ ou $-\sqrt{2}$. 
\\
\\Donc, les $\lambda$-quiddités sur $<\sqrt{2}>$ de taille supérieure à 5 sont réductibles (par la proposition \ref{32} i) et la discussion précédente). Par les propositions \ref{31} et \ref{32} ii), les $\lambda$-quiddités irréductibles sur $<\sqrt{2}>$ sont celles données dans l'énoncé.

\qed

\subsection{Le cas k=3}

Soient $m$ un entier strictement positif et $(a_{1},\ldots,a_{m})$ une solution de \eqref{a}, avec, pour tout $j$ dans $[\![1;m]\!]$, $a_{j}=k_{j}\sqrt{3}$ et $k_{j} \in \mathbb{Z}$. En procédant de la même manière que dans le cas précédent, on obtient $m$ pair, c'est-à-dire $m=2n$, et $(k_{1},3k_{2},\ldots,k_{2n-1},3k_{2n})$ solution de \eqref{a} sur $\mathbb{Z}$. 
\\
\\Par la proposition \ref{31}, les seules $\lambda$-quiddités sur $<\sqrt{3}>$ de taille inférieure à 4 sont celles données dans l'énoncé. Par la proposition \ref{32} ii), celles-ci sont irréductibles. On suppose maintenant $m$ pair supérieur à 6. Notre objectif est de montrer que si tous les $k_{i}$ sont non nuls alors quatre $k_{i}$ consécutifs sont égaux à 1 ou à -1.
\\
\\On suppose que pour tout $i \in [\![1;2n]\!]$, $k_{i} \neq 0$. Ainsi, par le théorème \ref{33},  il existe $i \in [\![1;2n]\!]$ tel que $k_{i}=\pm 1$ et $i$ impair. 
\\
\\On montre, par un simple calcul, que $M_{3}(a,1,b)=M_{2}(a-1,b-1)$ et $M_{3}(a,-1,b)=-M_{2}(a+1,b+1)$. Si on "réduit" de cette façon tous les $\pm 1$ présents dans $(k_{1},3k_{2},\ldots,k_{2n-1},3k_{2n})$ on obtient une nouvelle solution. Cette nouvelle $\lambda$-quiddité ne peut contenir que des éléments appartenant à la liste suivante :
\begin{itemize}
\item  $k_{i}$ avec $i$ impair et $\left|k_{i}\right| \geq 2$;
\item $3k_{i}$ avec $i$ pair;
\item $3k_{i}-1$ avec $i$ pair;
\item $3k_{i}+1$ avec $i$ pair;
\item $3k_{i}-2$ avec $i$ pair;
\item $3k_{i}+2$ avec $i$ pair.
\\
\end{itemize}

\noindent Par le théorème \ref{33}, un de ces éléments est nécessairement égal à 0, 1 ou -1. Comme pour tout $i$ dans $[\![1;2n]\!]$ $k_{i} \neq 0$, cela ne peut pas être un des éléments des quatre premières catégories. S'il existe $i \in [\![1;2n]\!]$ tel que $3k_{i}-2 \in \{0, 1, -1\}$. On a nécessairement $3k_{i}-2=1$, c'est-à-dire $k_{i}=1$. De plus, pour arriver à $3k_{i}-2$ lors de la réduction on a impérativement utilisé un 1 adjacent à droite et à gauche de $3k_{i}$. Ainsi, on a trois $k_{i}$ consécutifs égaux à 1. On procède de façon analogue s'il existe $i \in [\![1;2n]\!]$ tel que $3k_{i}+2 \in \{0, 1, -1\}$. 
\\
\\On "réduit" à nouveau tous les $\pm 1$ présents dans la $\lambda$-quiddité obtenue précédemment. On obtient une nouvelle solution qui ne peut contenir que des éléments appartenant à la liste suivante :
\begin{itemize}
\item  $k_{i}$ avec $i$ impair et $\left|k_{i}\right| \geq 2$;
\item $3k_{i}$ avec $i$ pair;
\item $3k_{i}-1$ avec $i$ pair;
\item $3k_{i}+1$ avec $i$ pair;
\item $3k_{i}-2$ avec $i$ pair;
\item $3k_{i}+2$ avec $i$ pair;
\item $3k_{i}-3$ avec $i$ pair;
\item $3k_{i}+3$ avec $i$ pair;
\item $3k_{i}-4$ avec $i$ pair;
\item $3k_{i}+4$ avec $i$ pair.
\\
\end{itemize}

\noindent Par le théorème \ref{33}, un de ces éléments est nécessairement égal à 0, 1 ou -1. Comme pour tout $i$ dans $[\![1;2n]\!]$ $k_{i} \neq 0$, cela ne peut pas être un des éléments des quatre premières catégories. 
\\
\\Supposons maintenant que $3k_{i}-2 \in \{0, 1, -1\}$ pour un certain $i$ pair. On a nécessairement $3k_{i}-2=1$, c'est-à-dire $k_{i}=1$. De plus, pour arriver à $3k_{i}-2$ lors de la réduction on a obligatoirement utilisé un 1 adjacent à gauche ou à droite d'un $3k_{i}$ ou d'un $3k_{i}-1$. Par ce qui précède, ce 1 provenait d'un triplet de $k_{i}$ consécutifs égaux à 1. On a donc au moins quatre 1 consécutifs.
\\
\\Supposons maintenant que $3k_{i}+2 \in \{0, 1, -1\}$ pour un certain $i$ pair. On a nécessairement $3k_{i}+2=-1$, c'est-à-dire $k_{i}=-1$. De plus, pour arriver à $3k_{i}+2$ lors de la réduction on a obligatoirement utilisé un -1 adjacent à gauche ou à droite d'un $3k_{i}$ ou d'un $3k_{i}+1$ . Par ce qui précède, ce -1 provenait d'un triplet de $k_{i}$ consécutifs égaux à -1. On a donc au moins quatre -1 consécutifs.
\\
\\Supposons maintenant que $3k_{i}+3 \in \{0, 1, -1\}$ pour un certain $i$ pair. On a nécessairement $3k_{i}+3=0$, c'est-à-dire $k_{i}=-1$. De plus, pour arriver à $3k_{i}+3$ lors de la réduction on a obligatoirement utilisé un -1 adjacent à gauche ou à droite d'un $3k_{i}+1$ ou d'un $3k_{i}+2$. Par ce qui précède, ce -1 provenait d'un triplet de $k_{i}$ consécutifs égaux à -1. On a donc au moins quatre -1 consécutifs. On procède de façon analogue si $3k_{i}-3 \in \{0, 1, -1\}$ pour un certain $i$ pair.
\\
\\Supposons maintenant que $3k_{i}-4 \in \{0, 1, -1\}$ pour un certain $i$ pair. On a nécessairement $3k_{i}-4=-1$, c'est-à-dire $k_{i}=1$. De plus, pour arriver à $3k_{i}-4$ lors de la réduction on a obligatoirement utilisé un 1 adjacent à gauche et à droite d'un $3k_{i}-2$. Par ce qui précède, chacun de ces 1 provenait d'un triplet de $k_{i}$ consécutifs égaux à 1. On a donc au moins quatre 1 consécutifs. On procède de façon analogue si $3k_{i}+4 \in \{0, 1, -1\}$ pour un certain $i$ pair.
\\
\\Ainsi, si $(a_{1},\ldots,a_{n})$ est une $\lambda$-quiddité sur $<\sqrt{3}>$ alors $n$ est pair et une des deux conditions suivantes est obligatoirement vérifiée :
\begin{itemize}
\item un des $a_{i}$ est nul;
\item quatre $a_{i}$ consécutifs sont égaux et valent $\sqrt{3}$ ou $-\sqrt{3}$.
\\
\end{itemize}

\noindent Par un simple calcul, on montre que $(\sqrt{3},\sqrt{3},\sqrt{3},\sqrt{3},\sqrt{3},\sqrt{3})$ et $(-\sqrt{3},-\sqrt{3},-\sqrt{3},-\sqrt{3},-\sqrt{3},-\sqrt{3})$ sont des $\lambda$-quiddités sur $<\sqrt{3}>$. Aussi, on peut les utiliser pour réduire toutes les solutions de \eqref{a} de taille supérieure à 7 dont quatre éléments consécutifs sont égaux et valent $\sqrt{3}$ ou $-\sqrt{3}$. 
\\
\\Donc, les $\lambda$-quiddités sur $<\sqrt{3}>$ de taille supérieure à 7 sont réductibles (par la proposition \ref{32} i) et la discussion précédente). 
\\
\\Soit $(a_{1},\ldots,a_{6})$ une $\lambda$-quiddité sur $<\sqrt{3}>$ de taille 6. Si cette solution contient 0 alors, par la proposition \ref{32} i), elle est réductible. Si cette solution ne contient pas 0 alors, par ce qui précède, elle contient quatre $\sqrt{3}$ (ou $-\sqrt{3}$) consécutifs. Quitte à effectuer des permutations circulaires, la solution est de la forme $(a\sqrt{3},\sqrt{3},\sqrt{3},\sqrt{3},\sqrt{3},b\sqrt{3})$ ou $(-a\sqrt{3},-\sqrt{3},-\sqrt{3},-\sqrt{3},-\sqrt{3},-b\sqrt{3})$. Si $(a\sqrt{3},\sqrt{3},\sqrt{3},\sqrt{3},\sqrt{3},b\sqrt{3})$ est une solution de \eqref{a} alors on a 
\[0=K_{5}(a\sqrt{3},\sqrt{3},\sqrt{3},\sqrt{3},\sqrt{3})=a\sqrt{3}K_{4}(\sqrt{3},\sqrt{3},\sqrt{3},\sqrt{3})-K_{3}(\sqrt{3},\sqrt{3},\sqrt{3})=a\sqrt{3}-\sqrt{3}.\]

\noindent Donc, $a=1$. On montre de même que $b=1$. Le cas $(-a\sqrt{3},-\sqrt{3},-\sqrt{3},-\sqrt{3},-\sqrt{3},-b\sqrt{3})$ solution se traite de façon analogue. De plus, ces deux solutions sont irréductibles car dans le cas contraire elles seraient la somme de deux solutions de taille 4 et contiendraient 0.
\\
\\Ainsi, les $\lambda$-quiddités irréductibles sur $<\sqrt{3}>$ sont celles données dans l'énoncé.

\qed

\subsection{Quelques remarques concernant les $\lambda$-quiddités irréductibles sur $\mathbb{Z}[\sqrt{k}]$}

Les résultats obtenus sur les sous-groupes $<\sqrt{k}>$ incitent naturellement à se tourner vers les anneaux $\mathbb{Z}[\sqrt{k}]$. En particulier, il est légitime de se demander ce que devienne les résultats d'irréductibilité obtenus pour les sous-groupes lorsque l'on se place sur les anneaux.
\\
\\Commençons par remarquer que les $\lambda$-quiddités irréductibles sur $<\sqrt{k}>$ sont toujours des $\lambda$-quiddités irréductibles sur $\mathbb{Z}[\sqrt{k}]$, et même plus généralement sur tous les sous-groupes de $\mathbb{C}$ contenant $<\sqrt{k}>$. En effet, soit $G$ un tel groupe. Les uplets du théorème \ref{25} sont toujours des solutions, puisque le calcul matriciel ne dépend que de leurs composantes. Les solutions de taille 4 sont irréductibles puisqu'elles ne contiennent pas $\pm 1$. Considérons maintenant $(\sqrt{3},\sqrt{3},\sqrt{3},\sqrt{3},\sqrt{3},\sqrt{3})$. Supposons par l'absurde que cette $\lambda$-quiddité est réductible sur $G$. Il existe une solution de la forme $(a,\sqrt{3},\ldots,\sqrt{3},b)$ de taille $3 \leq l \leq 5$, avec $(a,b) \in G^{2}$. En particulier, on a $K_{l-2}(\sqrt{3},\ldots,\sqrt{3})=\pm 1$. Or, $K_{1}(\sqrt{3})=\sqrt{3}$, $K_{2}(\sqrt{3},\sqrt{3})=2$ et $K_{3}(\sqrt{3},\sqrt{3},\sqrt{3})=\sqrt{3}$, ce qui est absurde. Par la proposition \ref{opposé}, $(-\sqrt{3},-\sqrt{3},-\sqrt{3},-\sqrt{3},-\sqrt{3},-\sqrt{3})$ est également irréductible.
\\
\\En revanche, il y a en général d'autres $\lambda$-quiddités irréductibles. Tout d'abord, il faut rajouter, pour $k>1$, $(1,1,1)$, $(-1,-1,-1)$, $(a+b\sqrt{k},0,-a-b\sqrt{k},0)$ et $(0,a+b\sqrt{k},0,-a-b\sqrt{k})$, avec $a+b\sqrt{k} \neq \pm 1$. Si $k$ est un carré alors $\mathbb{Z}[\sqrt{k}]=\mathbb{Z}$ et on connaît toutes les $\lambda$-quiddités irréductibles. Si $k$ n'est pas un carré, il peut y avoir d'autres solutions, qui sont bien sûr intimement liées aux propriétés arithmétiques de l'entier $k$. 
\\
\\Remarquons notamment que les $\lambda$-quiddités de taille 4 sur $\mathbb{Z}[\sqrt{k}]$ sont étroitement liées aux solutions (entières) des équations de Pell-Fermat, qui sont les équations diophantiennes de la forme $x^{2}-ky^{2}=m$. En effet, les solutions de $x^{2}-ky^{2}=2$ permettent de construire les $\lambda$-quiddités 
\[(x-y\sqrt{k},x+y\sqrt{k},x-y\sqrt{k},x+y\sqrt{k}),\] qui sont irréductibles (puisque $(1,0)$ et $(-1,0)$ ne sont pas solutions de notre équation diophantienne). Notons par ailleurs qu'une équation de Pell-Fermat a soit une infinité de solutions soit aucune solution. Par exemple, $x^{2}-3y^{2}=2$ n'a pas de solution, car sinon on aurait, en prenant l'équation modulo 3, $\overline{-1}$ carré dans $\mathbb{Z}/3\mathbb{Z}$. En revanche, $x^{2}-7y^{2}=2$ a une infinité de solutions, par exemple $(3,1)$ ou $(45,17)$. Ainsi, $(45-17\sqrt{7},45+17\sqrt{7},45-17\sqrt{7},45+17\sqrt{7})$ est une $\lambda$-quiddité irréductible sur $\mathbb{Z}[\sqrt{7}]$. Par ailleurs, pour construire une $\lambda$-quiddité de taille 4, on doit chercher des entiers $x,y,z,t$ tels que $(x+y\sqrt{k})(z+t\sqrt{k})=2$. En prenant la norme $N(x+y\sqrt{k})=x^{2}-ky^{2}$, on voit que $x^{2}-ky^{2}=d$ et $z^{2}-kt^{2}=d'$ avec $dd'=4$.
\\
\\Pour certaines valeurs de $k$, on peut trouver assez facilement des $\lambda$-quiddités irréductibles de plus grande taille. Plaçons-nous par exemple sur $\mathbb{Z}[\sqrt{2}]$. Les uplets suivants sont des $\lambda$-quiddités irréductibles :
\begin{itemize}
\item $(1-\sqrt{2},-2-2\sqrt{2},1-\sqrt{2},-2-2\sqrt{2})$, $(-2-\sqrt{2},-2+\sqrt{2},-2-\sqrt{2},-2+\sqrt{2})$;
\item $(1-\sqrt{2},-2-\sqrt{2},-2-\sqrt{2},1-\sqrt{2},-5-4\sqrt{2})$, $(-3+2\sqrt{2},-2-\sqrt{2},-2,1-\sqrt{2},3+2\sqrt{2})$, $(-1-\sqrt{2},-\sqrt{2},-2+\sqrt{2},1+\sqrt{2},-3+2\sqrt{2})$;
\item $(4-3\sqrt{2},-2-2\sqrt{2},-2-2\sqrt{2},-2,-2+\sqrt{2},-20-14\sqrt{2})$, 
\\$(-6+4\sqrt{2},-2-2\sqrt{2},-2-2\sqrt{2},-2+\sqrt{2},-2\sqrt{2},-4-3\sqrt{2})$.
\end{itemize}

\noindent Cette liste est bien entendu non exhaustive. Cela dit, elle est suffisamment étoffée pour constater que le cas de $\mathbb{Z}[\sqrt{k}]$ est infiniment plus complexe que celui de $<\sqrt{k}>$ lorsque $k$ n'est pas un carré. On peut même, sans trop s'avancer, émettre la conjecture suivante : si $k$ n'est pas un carré alors la taille des $\lambda$-quiddités irréductibles sur $\mathbb{Z}[\sqrt{k}]$ n'est pas majorée.

\section{Démonstration du théorème \ref{26}}
\label{preuve26}

\subsection{Preuve des bijections}

Nous allons commencer par établir le point i) du théorème. Pour cela, on commence par le résultat ci-dessous :

\begin{proposition}
\label{51}

Soit $k \in \bN$. Il n'y a pas de $\lambda$-quiddité de taille impaire sur $<i\sqrt{k}>$.

\end{proposition}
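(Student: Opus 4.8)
The plan is to reuse, essentially verbatim, the parity argument already carried out for $<\sqrt{2}>$ in Section \ref{preuve25}, transposed to the purely imaginary generator. Fix an odd integer $m \in \bM$ and assume for contradiction that $(a_{1},\ldots,a_{m})$ is a $\lambda$-quiddité sur $<i\sqrt{k}>$; write $a_{j}=k_{j}i\sqrt{k}$ with $k_{j}\in\bZ$. Using the continuant description of $M_{m}(a_{1},\ldots,a_{m})$ recalled in Section \ref{Rp}, the relation $M_{m}(a_{1},\ldots,a_{m})=\epsilon\,Id$ with $\epsilon\in\{\pm 1\}$ gives in particular $K_{m}(a_{1},\ldots,a_{m})=\epsilon$.

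Next I would expand $K_{m}(a_{1},\ldots,a_{m})$ with Euler's algorithm. Every monomial occurring in it is, up to a sign, a product of the $a_{j}$ from which some number $p$ of disjoint pairs of consecutive factors have been removed, hence a product of exactly $m-2p$ of the numbers $k_{j}i\sqrt{k}$; since $m$ is odd, $m-2p$ is odd, and in particular $\geq 1$. Collecting the factors $i\sqrt{k}$ and using $(i\sqrt{k})^{2}=-k\in\bZ$, each such monomial equals $i\sqrt{k}$ times an integer, so after summation $K_{m}(a_{1},\ldots,a_{m})\in i\sqrt{k}\,\bZ$; in particular $K_{m}(a_{1},\ldots,a_{m})$ is purely imaginary.

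Then I would conclude: $\epsilon=K_{m}(a_{1},\ldots,a_{m})$ would lie simultaneously in $\{\pm 1\}$ (a real nonzero number) and in $i\bR$, which is absurd, so there is no $\lambda$-quiddité of odd size. I expect no genuine obstacle here. The argument is uniform in $k$: for $k=0$ it degenerates to $i\sqrt{k}\,\bZ=\{0\}$, which again forces $\epsilon=0$, and for $k=1$ nothing extra is needed. The only point to watch is the elementary bookkeeping in Euler's rule, namely that an odd total number $m$ of factors always leaves an odd (hence nonzero) number of surviving factors in every monomial — this is precisely what forces $K_{m}(a_{1},\ldots,a_{m})$ to be purely imaginary, whereas in the even case the empty product would contribute a real constant.
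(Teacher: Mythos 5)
Your argument is correct and follows essentially the same route as the paper: write $M_{m}(a_{1},\ldots,a_{m})=\epsilon\,Id$ via continuants, expand $K_{m}$ by Euler's rule, observe that every surviving monomial contains an odd number of factors $i\sqrt{k}$ and is therefore purely imaginary, contradicting $K_{m}=\epsilon\in\{\pm 1\}$. Your normalization $(i\sqrt{k})^{2}=-k$ showing $K_{m}\in i\sqrt{k}\,\bZ$ is a slightly sharper bookkeeping than the paper's, but the proof is the same.
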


\begin{proof}

Soit $n$ un entier positif impair. Supposons qu'il existe une solution $(a_{1},\ldots,a_{n})$ de \eqref{a}, avec, pour tout $j$ dans $[\![1;n]\!]$, $a_{j}=i k_{j}\sqrt{k}$ et $k_{j} \in \mathbb{Z}$. Il existe $\epsilon \in \{\pm 1 \}$ tel que : \[\epsilon Id=M_{n}(a_{1},\ldots,a_{n})=\begin{pmatrix}
    K_{n}(a_{1},\ldots,a_{n}) & -K_{n-1}(a_{2},\ldots,a_{n}) \\
    K_{n-1}(a_{1},\ldots,a_{n-1})  & -K_{n-2}(a_{2},\ldots,a_{n-1}) 
   \end{pmatrix}.\]
\noindent En particulier, $K_{n}(a_{1},\ldots,a_{n})=\epsilon \in \mathbb{R}$.
\\
\\Par l'algorithme d'Euler, $K_{n}(a_{1},\ldots,a_{n})$ est une somme dont chacun des termes est un produit d'un nombre impair de $a_{j}$ (et éventuellement de -1). On regroupe les $i$ présents dans chaque terme. Chacun des termes de la somme est alors le produit de $i$ à une puissance impaire multiplié par des multiples entiers de $\sqrt{k}$. Comme les puissances impaires de $i$ valent $\pm i$, $K_{n}(a_{1},\ldots,a_{n})$ est un imaginaire pur.
\\
\\Ainsi, $K_{n}(a_{1},\ldots,a_{n}) \in \mathbb{R} \cap i\mathbb{R}$, c'est-à-dire $K_{n}(a_{1},\ldots,a_{n})=0$. Ceci est absurde puisque $\epsilon \in \{\pm 1 \}$.
\\
\\Donc, \eqref{a} n'a pas de solution de taille impaire sur $<i\sqrt{k}>$.

\end{proof}

\noindent On s'intéresse pour l'instant à l'équation sur $\mathbb{N}i\sqrt{k}$. On commence par le résultat suivant :

\begin{lemma}
\label{52}

Soit $k \in \bN$. Les solutions de \eqref{a} sur $\mathbb{N}i\sqrt{k}$ contiennent un 0.
	
\end{lemma}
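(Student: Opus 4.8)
The plan is to adapt the technique used in the proofs of Propositions \ref{311} and \ref{51}: read a single coefficient of $M_{2n}$ as a continuant, expand it with Euler's algorithm, and this time keep careful track of signs so as to reach a positivity contradiction.

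First I would dispose of $k=0$: then $\mathbb{N}i\sqrt{0}=\{0\}$ and every solution of \eqref{a} on this set has all its entries equal to $0$. So assume $k\geq 1$, let $(a_1,\ldots,a_N)$ be a solution of \eqref{a} on $\mathbb{N}i\sqrt{k}$, and write $a_j=k_ji\sqrt{k}$ with $k_j\in\bN$. By Proposition \ref{51}, $N$ is even, say $N=2n$ with $n\geq 1$. Arguing by contradiction, suppose $k_j\geq 1$ for every $j$.

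The core of the proof is then the following. Since $M_{2n}(a_1,\ldots,a_{2n})=\pm Id$, its lower-left coefficient vanishes: $K_{2n-1}(a_1,\ldots,a_{2n-1})=0$. By Euler's algorithm, $K_{2n-1}(a_1,\ldots,a_{2n-1})$ is the sum, over all choices of $p\geq 0$ pairwise disjoint pairs of consecutive indices in $[\![1;2n-1]\!]$, of the monomial $(-1)^p\prod_{j\notin\text{pairs}}a_j$, and such a monomial is a product of $2n-1-2p=2(n-1-p)+1$ of the $a_j$. Using $(i\sqrt{k})^{2}=-k$, this monomial equals $(i\sqrt{k})^{2(n-1-p)+1}\prod_{j\notin\text{pairs}}k_j=i\sqrt{k}\,(-k)^{n-1-p}\prod_{j\notin\text{pairs}}k_j$. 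Denoting by $\sigma_p\geq 0$ the sum of the products $\prod_{j\notin\text{pairs}}k_j$ over all admissible configurations of $p$ pairs, I obtain
\[0=K_{2n-1}(a_1,\ldots,a_{2n-1})=i\sqrt{k}\sum_{p=0}^{n-1}(-1)^p(-k)^{n-1-p}\sigma_p=i\sqrt{k}\,(-1)^{n-1}\sum_{p=0}^{n-1}k^{n-1-p}\sigma_p,\]
the key point being that $(-1)^p(-k)^{n-1-p}=(-1)^{n-1}k^{n-1-p}$ is independent of $p$. Since $k\geq 1$, $i\sqrt{k}\neq 0$, so $\sum_{p=0}^{n-1}k^{n-1-p}\sigma_p=0$; but each $\sigma_p\geq 0$, each $k^{n-1-p}\geq 1$, and the term $p=0$ equals $k^{n-1}\prod_{j=1}^{2n-1}k_j\geq 1$ under the assumption $k_j\geq 1$. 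Hence the sum is $\geq 1$, a contradiction, so some $k_j=0$ and the solution contains a $0$.

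The step I expect to require the most care is this sign bookkeeping: one must verify that $2n-1-2p$ is odd (so that, after pulling out the factors $-k$, each monomial retains exactly one factor $i\sqrt{k}$) and that the resulting coefficient $(-1)^p(-k)^{n-1-p}$ does not depend on $p$; everything else — in particular the positivity of $\sum_{p}k^{n-1-p}\sigma_p$ — is then immediate. As a side remark, for $k\geq 4$ the conclusion (in fact the existence of two zeros) already follows directly from Theorem \ref{33}, since $|a_j|=k_j\sqrt{k}\geq 2$ whenever $k_j\geq 1$; the continuant argument above has the advantage of handling $k\in\{1,2,3\}$ uniformly as well.
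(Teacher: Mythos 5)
Your proof is correct and rests on exactly the same mechanism as the paper's: expand a continuant entry of $M_{2n}$ by Euler's algorithm and observe that $(i\sqrt{k})^{2}=-k$ makes the alternating signs $(-1)^{p}$ cancel, so all terms carry the same sign and a positivity argument forces some $k_{j}=0$. The only (inessential) difference is the coefficient used: you exploit the vanishing off-diagonal entry $K_{2n-1}(a_{1},\ldots,a_{2n-1})=0$, whereas the paper works with the diagonal entry $K_{n}(a_{1},\ldots,a_{n})=\pm 1$, showing it equals $1$ plus a sum of nonnegative integers.
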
	

\begin{proof}

Soit $(a_{1},\ldots,a_{n})$ une solution de \eqref{a} sur $\mathbb{N}i\sqrt{k}$. Par la proposition précédente, $n$ est pair. On suppose que $n$ est divisible par 4. Pour tout $j$ dans $[\![1;n]\!]$, $a_{j}=i k_{j}\sqrt{k}$ avec $k_{j} \in \mathbb{N}$. Il existe $\epsilon \in \{\pm 1 \}$ tel que : \[\epsilon Id=M_{n}(a_{1},\ldots,a_{n})=\begin{pmatrix}
    K_{n}(a_{1},\ldots,a_{n}) & -K_{n-1}(a_{2},\ldots,a_{n}) \\
    K_{n-1}(a_{1},\ldots,a_{n-1})  & -K_{n-2}(a_{2},\ldots,a_{n-1}) 
   \end{pmatrix}.\]
\noindent En particulier, $K_{n}(a_{1},\ldots,a_{n})=\epsilon$.
\\
\\On regroupe les $i$ présents dans chaque terme de la somme définissant le continuant $K_{n}(a_{1},\ldots,a_{n})$ (algorithme d'Euler). Les termes dans lesquels on a supprimé un nombre pair de paire de $a_{j}$ consécutifs sont multipliés par $i\sqrt{k}$ puissance un multiple de 4, c'est-à-dire par une puissance paire de $k$. Les termes dans lesquels on a supprimé un nombre impair de paire de $a_{j}$ consécutifs sont multipliés par -1 et par $i\sqrt{k}$ puissance un entier divisible par 2 mais par 4, c'est-à-dire par $(-1) \times (-1)=1$ multiplié par une puissance de $k$. 
\\
\\Ainsi, $K_{n}(a_{1},\ldots,a_{n})$ est égal à 1 (obtenu en supprimant toutes les paires possibles) plus une somme d'entiers positifs. Donc, nécessairement $K_{n}(a_{1},\ldots,a_{n})=1$ et un des $a_{j}$ est nul (car sinon on aurait $K_{n}(a_{1},\ldots,a_{n}) >1)$.
\\
\\On procède de façon analogue si $n$ n'est pas divisible par 4 (dans ce cas l'opposé de $K_{n}(a_{1},\ldots,a_{n})$ est égal à 1 plus une somme d'entiers positifs).

\end{proof}

\noindent Ces résultats permettent de classifier les $\lambda$-quiddités irréductibles sur $\mathbb{N}i\sqrt{k}$.

\begin{theorem}
\label{53}

Les $\lambda$-quiddités sur $\mathbb{N}i\sqrt{k}$ sont les $2n$-uplets ne contenant que 0. En particulier, $(0,0,0,0)$ est la seule $\lambda$-quiddité irréductible sur $\mathbb{N}i\sqrt{k}$.
	
\end{theorem}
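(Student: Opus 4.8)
Le plan est une récurrence sur la taille, reposant sur les deux résultats qui précèdent. La proposition \ref{51} montre qu'une $\lambda$-quiddité sur $\mathbb{N}i\sqrt{k}$ (qui est en particulier une $\lambda$-quiddité sur $<i\sqrt{k}>$) est nécessairement de taille paire, et le lemme \ref{52} montre qu'une telle $\lambda$-quiddité contient un $0$. Soit donc $(a_{1},\ldots,a_{2m})$ une $\lambda$-quiddité sur $\mathbb{N}i\sqrt{k}$ ; je démontrerais par récurrence sur $m \geq 1$ que $(a_{1},\ldots,a_{2m})=(0,\ldots,0)$, ce qui donne la première partie de l'énoncé.

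Pour $m=1$, la proposition \ref{31} fournit directement $(a_{1},a_{2})=(0,0)$. Pour $m \geq 2$, on applique le lemme \ref{52} pour trouver $i$ avec $a_{i}=0$ ; quitte à permuter circulairement (ce qui préserve le caractère de $\lambda$-quiddité), on suppose $2 \leq i \leq 2m-1$. Un calcul matriciel élémentaire, du même type que les identités $M_{3}(a,1,b)=M_{2}(a-1,b-1)$ utilisées plus haut, donne
\[\begin{pmatrix} a_{i+1} & -1 \\ 1 & 0 \end{pmatrix}\begin{pmatrix} 0 & -1 \\ 1 & 0 \end{pmatrix}\begin{pmatrix} a_{i-1} & -1 \\ 1 & 0 \end{pmatrix}=-\begin{pmatrix} a_{i-1}+a_{i+1} & -1 \\ 1 & 0 \end{pmatrix},\]
d'où $M_{2m}(a_{1},\ldots,a_{2m})=-M_{2m-2}(a_{1},\ldots,a_{i-2},a_{i-1}+a_{i+1},a_{i+2},\ldots,a_{2m})$. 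Comme $\mathbb{N}i\sqrt{k}$ est stable par addition, ce $(2m-2)$-uplet est une $\lambda$-quiddité sur $\mathbb{N}i\sqrt{k}$, de taille paire strictement inférieure à $2m$ ; par hypothèse de récurrence tous ses termes sont nuls. En particulier $a_{j}=0$ pour $j \notin \{i-1,i,i+1\}$ et $a_{i-1}+a_{i+1}=0$. Or, pour $k \geq 1$, en écrivant $a_{i-1}=c\,i\sqrt{k}$ et $a_{i+1}=d\,i\sqrt{k}$ avec $c,d \in \bN$, l'égalité $c+d=0$ force $c=d=0$ (le cas $k=0$ étant trivial). Donc $a_{i-1}=a_{i}=a_{i+1}=0$ et le $2m$-uplet est identiquement nul, ce qui achève la récurrence.

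Il reste l'assertion sur les $\lambda$-quiddités irréductibles. Les $\lambda$-quiddités sur $\mathbb{N}i\sqrt{k}$ étant exactement les $(0,\ldots,0)$ de taille paire, $(0,0)$ n'est jamais irréductible et tout $(0,\ldots,0)$ de taille $\geq 6$ est réductible par la proposition \ref{32} i) (il est de taille $\geq 5$ et contient $0$). Enfin $(0,0,0,0)$ est irréductible : si $(0,0,0,0) \sim (a_{1},\ldots,a_{l}) \oplus (b_{1},\ldots,b_{l'})$ avec $l,l' \geq 3$, alors $l+l'-2=4$, donc $l=l'=3$, ce qui est impossible puisqu'il n'existe pas de $\lambda$-quiddité de taille $3$ sur $\mathbb{N}i\sqrt{k}$ (proposition \ref{51}, ou proposition \ref{31} car $\pm 1 \notin \mathbb{N}i\sqrt{k}$). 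Ceci termine la preuve.

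Quant au point le plus délicat : il ne se trouve pas vraiment ici, le c\oe ur de l'argument ayant été traité dans le lemme \ref{52} (l'analyse des puissances de $i$ dans l'algorithme d'Euler). Pour le théorème lui-même, la seule vigilance porte sur le signe et la permutation circulaire dans l'étape de réduction, et surtout sur l'utilisation de l'absence d'inverses additifs dans $\mathbb{N}i\sqrt{k}$ pour passer de $a_{i-1}+a_{i+1}=0$ à $a_{i-1}=a_{i+1}=0$ ; c'est précisément ce dernier point qui distingue le cas $\mathbb{N}i\sqrt{k}$ du cas $<i\sqrt{k}>$ et rend la classification aussi triviale.
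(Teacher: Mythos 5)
Votre démonstration est correcte et suit essentiellement la même démarche que celle du texte : taille paire (proposition 5.1), présence d'un zéro (lemme 5.2), puis contraction en ce zéro via la même identité matricielle, la seule différence étant que vous organisez la descente en récurrence sur la taille et concluez par l'absence d'opposés dans $\mathbb{N}i\sqrt{k}$ (point que le texte laisse implicite en itérant le procédé tout en conservant une composante non nulle, jusqu'à l'absurde en taille 2). Pensez simplement à mentionner la réciproque, utilisée implicitement pour la partie irréductibilité, à savoir que tout $2n$-uplet nul est bien une $\lambda$-quiddité (puisque $M_{2}(0,0)=-Id$).
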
	

\begin{proof}

Par la proposition \ref{51}, les solutions de \eqref{a} sur $\mathbb{N}i\sqrt{k}$ sont de taille paire. Supposons par l'absurde qu'il existe des solutions non nulles de \eqref{a} sur $\mathbb{N}i\sqrt{k}$. Soit $(a_{1},\ldots,a_{n})$ une $\lambda$-quiddité sur $\mathbb{N}i\sqrt{k}$ possédant une composante non nulle. Par le lemme précédent, il existe $j \in [\![1;n]\!]$ tel que $a_{j}=0$.
\\
\\Or, on a \[\begin{pmatrix}
    a & -1 \\
    1    & 0 
    \end{pmatrix} \begin{pmatrix}
    0 & -1 \\
    1    & 0 
    \end{pmatrix} \begin{pmatrix}
    b & -1 \\
    1    & 0 
    \end{pmatrix} = \begin{pmatrix}
    -a-b & 1 \\
    -1    & 0 
    \end{pmatrix}= -\begin{pmatrix}
    a+b & -1 \\
    1    & 0 
    \end{pmatrix}.\]
\noindent En utilisant cela, $(a_{1},\ldots,a_{j-2},a_{j-1}+a_{j+1},a_{j+2},\ldots,a_{n})$ est une solution de \eqref{a} sur $\mathbb{N}i\sqrt{k}$. Cette solution possède donc un zéro et une composante non nulle et on peut ainsi réitérer le processus.
\\
\\En procédant ainsi, on arrive à une solution de taille 2 possédant une composante non nulle. Ceci est absurde par la proposition \ref{31}.
\\
\\Donc, les $\lambda$-quiddités sur $\mathbb{N}i\sqrt{k}$ sont des $2n$-uplets ne contenant que des 0. De plus, tous les $2n$-uplets ne contenant que des 0 sont des $\lambda$-quiddités sur $\mathbb{N}i\sqrt{k}$. En particulier, $(0,0,0,0)$ est la seule $\lambda$-quiddité irréductible sur $\mathbb{N}i\sqrt{k}$ (proposition \ref{32} i)).

\end{proof}

\noindent On revient maintenant au cas de $\mathbb{Z}i\sqrt{k}$ :

\begin{proof}[Démonstration du théorème \ref{26}]

Soit $k \in \bN$. $\Omega_{k}$ et $\Xi_{k}$ n'ont que des éléments de taille paire (proposition \ref{51} et section précédente). Soit $(a_{1}i,a_{2}i,\ldots,a_{2n-1}i,a_{2n}i)$ une $\lambda$-quiddité sur $<i\sqrt{k}>$. Il existe $\epsilon \in \{\pm 1 \}$ tel que :
\begin{eqnarray*}
\epsilon Id &=& M_{2n}(a_{1}i,a_{2}i,\ldots,a_{2n-1}i,a_{2n}i) \\
            &=& \begin{pmatrix}
    K_{2n}(a_{1}i,a_{2}i,\ldots,a_{2n-1}i,a_{2n}i) & -K_{2n-1}(a_{2}i,\ldots,a_{2n-1}i,a_{2n}i) \\
    K_{2n-1}(a_{1}i,a_{2}i,\ldots,a_{2n-1}i)  & -K_{2n-2}(a_{2}i,\ldots,a_{2n-1}i) 
   \end{pmatrix}.\\
\end{eqnarray*}
\noindent Par l'algorithme d'Euler, $K_{2n}(a_{1}i,a_{2}i,\ldots,a_{2n-1}i,a_{2n}i)$ est une somme de produits des $a_{j}i$. Dans chaque terme d'une de ces sommes, pour chaque $j$ impair $a_{j}$ est suivi d'un $a_{k}$ avec $k$ pair. On fait alors la manipulation suivante :\[a_{j}ia_{k}i=a_{j}a_{k}i^{2}=a_{j}(-a_{k}).\]
Avec cette manipulation, on a \[K_{2n}(a_{1}i,a_{2}i,\ldots,a_{2n-1}i,a_{2n}i)=K_{2n}(a_{1},-a_{2},\ldots,a_{2n-1},-a_{2n}).\]  
\noindent On procède de façon analogue pour $K_{2n-2}(a_{2}i,\ldots,a_{2n-1}i)$ (puisque pour pour chaque $j$ impair $a_{j}$ est précédé d'un $a_{l}$ avec $l$ pair). $K_{2n-1}(a_{1}i,a_{2}i,\ldots,a_{2n-1}i)$ est une somme de produits des $a_{j}i$. Pour chaque terme de la somme, on a deux choix :
\begin{itemize}
\item le terme est de la forme $a_{j}i$ avec $j$ impair;
\item le terme est un produit de $2u+1$ éléments de la forme $a_{j}i$ avec $u+1$ indices $j$ impair. Pour chaque $m$ pair $a_{m}$ est précédé d'un $a_{l}$ avec $l$ impair. On fait alors la manipulation suivante :\[a_{l}ia_{m}i=a_{l}a_{m}i^{2}=a_{l}(-a_{m}).\]
\end{itemize}
Cela donne $0=K_{2n-1}(a_{1}i,a_{2}i,\ldots,a_{2n-1}i)=iK_{2n-1}(a_{1},-a_{2},\ldots,a_{2n-1})$. En particulier, on a \[0=K_{2n-1}(a_{1},-a_{2},\ldots,a_{2n-1}).\] 

\noindent On procède de façon analogue pour $K_{2n-1}(a_{2}i,a_{3}i,\ldots,a_{2n}i)$.
\\
\\Donc, $(a_{1},-a_{2},\ldots,a_{2n-1},-a_{2n})$ est une $\lambda$-quiddité sur $<\sqrt{k}>$. Ainsi, $\varphi_{k}$ est bien défini.
\\
\\ Comme $\varphi_{k}$ est injective, il nous reste à montrer que $\varphi_{k}$ est surjective.
\\
\\ Soit $(a_{1},a_{2},\ldots,a_{2n-1},a_{2n})$ une $\lambda$-quiddité de taille paire sur $<\sqrt{k}>$. Par les mêmes arguments que ceux utilisés précédemment, $(a_{1}i,-a_{2}i,\ldots,a_{2n-1}i,-a_{2n}i)$ est une $\lambda$-quiddité sur $<i\sqrt{k}>$ et son image par $\varphi_{k}$ est $(a_{1},a_{2},\ldots,a_{2n-1},a_{2n})$. Donc, $\varphi_{k}$ est surjective.
\\
\\Ainsi, $\varphi_{k}$ établit une bijection entre $\Omega_{k}$ et $\Xi_{k}$.
\\
\\ On considère maintenant l'irréductibilité des solutions de \eqref{a} sur $<i\sqrt{k}>$ lorsque $k \neq 1$.
\\
\\Soit $(a_{1}i,a_{2}i,\ldots,a_{2n-1}i,a_{2n}i)$ une $\lambda$-quiddité sur $<i\sqrt{k}>$. Supposons que $\varphi_{k}((a_{1}i,a_{2}i,\ldots,a_{2n-1}i,a_{2n}i))$ est réductible. Comme $k \neq 1$, il existe $(b_{1},b_{2},\ldots,b_{2l-1},b_{2l})$ et $(c_{1},c_{2},\ldots,c_{2l'-1},c_{2l'})$ deux $\lambda$-quiddités de taille paire sur $<\sqrt{k}>$ de taille supérieure à 3 telles que :

\begin{eqnarray*}
(a_{1},-a_{2},\ldots,a_{2n-1},-a_{2n}) &=& \varphi_{k}((a_{1}i,a_{2}i,\ldots,a_{2n-1}i,a_{2n}i)) \\
                                       &\sim& (b_{1},b_{2},\ldots,b_{2l-1},b_{2l}) \oplus (c_{1},c_{2},\ldots,c_{2l'-1},c_{2l'}).\\
\end{eqnarray*}

\noindent Ainsi, $(a_{1}i,a_{2}i,\ldots,a_{2n-1}i,a_{2n}i) \sim (b_{1}i,-b_{2}i,\ldots,b_{2l-1}i,-b_{2l}i) \oplus (-c_{1}i,c_{2}i,\ldots,-c_{2l'-1}i,c_{2l'}i)$. Or, on a montré que $(b_{1}i,-b_{2}i,\ldots,b_{2l-1}i,-b_{2l}i)$ et $(c_{1}i,-c_{2}i,\ldots,c_{2l'-1}i,-c_{2l'}i)$ sont des $\lambda$-quiddités sur $<i\sqrt{k}>$. Par la proposition \ref{opposé}, $(-c_{1}i,c_{2}i,\ldots,-c_{2l'-1}i,c_{2l'}i)$ est une $\lambda$-quiddité sur $<i\sqrt{k}>$. Donc, l'image par $\varphi_{k}$ d'une solution irréductible sur $<i\sqrt{k}>$ est une solution irréductible sur $<\sqrt{k}>$.
\\
\\Soit $(a_{1}i,a_{2}i,\ldots,a_{2n-1}i,a_{2n}i)$ une $\lambda$-quiddité réductible sur $<i\sqrt{k}>$. Il existe
$(b_{1}i,b_{2}i,\ldots,b_{2l-1}i,b_{2l}i)$ et $(c_{1}i,c_{2}i,\ldots,c_{2l'-1}i,c_{2l'}i)$ deux $\lambda$-quiddités sur $<i\sqrt{k}>$ de taille supérieure à 3 telles que
\[(a_{1}i,a_{2}i,\ldots,a_{2n-1}i,a_{2n}i) \sim (b_{1}i,b_{2}i,\ldots,b_{2l-1}i,b_{2l}i) \oplus (c_{1}i,c_{2}i,\ldots,c_{2l'-1}i,c_{2l'}i).\]

\noindent On a 
\begin{eqnarray*}
\varphi_{k}((a_{1}i,a_{2}i,\ldots,a_{2n-1}i,a_{2n}i)) &=& (a_{1},-a_{2},\ldots,a_{2n-1},-a_{2n}) \\
                                                  &\sim& (b_{1},-b_{2},\ldots,b_{2l-1},-b_{2l}) \oplus (-c_{1},c_{2},\ldots,-c_{2l'-1},c_{2l'}). \\
\end{eqnarray*}

\noindent Or, $(c_{1},-c_{2},\ldots,c_{2l'-1},-c_{2l'})=\varphi_{k}((c_{1}i,c_{2}i,\ldots,c_{2l'-1}i,c_{2l'}i))$. Donc, $(c_{1},-c_{2},\ldots,c_{2l'-1},-c_{2l'})$ est une solution de \eqref{a} sur $<\sqrt{k}>$. Par la proposition \ref{opposé}, $(-c_{1},c_{2},\ldots,-c_{2l'-1},c_{2l'})$ est une solution de \eqref{a} sur $<\sqrt{k}>$. Donc, $\varphi_{k}((a_{1}i,a_{2}i,\ldots,a_{2n-1}i,a_{2n}i))$ est réductible. Ainsi, $\varphi_{k}$ établit une surjection entre les solutions irréductibles de $\Omega_{k}$ et les solutions irréductibles de $\Xi_{k}$.
\\
\\Donc, $\varphi_{k}$ établit une bijection entre les solutions irréductibles sur $<i\sqrt{k}>$ et les solutions irréductibles sur $<\sqrt{k}>$.

\end{proof}

Malheureusement, $\varphi_{1}$ n'établit pas une bijection entre les solutions irréductibles sur $<i>$ et les solutions irréductibles sur $\bZ$. En effet, il existe des solutions irréductibles de taille 3 sur $\bZ$ (voir Théorème \ref{35}) alors que cela n'est pas le cas sur $<i>$ (voir proposition \ref{51}). 

\subsection{Solutions pairement irréductibles}

Nous allons maintenant donner quelques éléments sur l'irréductibilité des $\lambda$-quiddités sur $\mathbb{Z}i$. On commence par la définition suivante qui concerne les solutions de taille paire sur $\bZ$.

\begin{definition}
\label{44}

Une $\lambda$-quiddité sur $\mathbb{Z}$ de taille paire est dite pairement réductible si on peut l'écrire comme une somme de deux $\lambda$-quiddités sur $\bZ$ de taille paire supérieure à 4. Dans le cas contraire, la $\lambda$-quiddité est dite pairement irréductible.

\end{definition}

\begin{examples} 
{\rm
\begin{itemize}
\item Les $\lambda$-quiddités de taille 4 sont pairement irréductibles;
\item $(2,2,1,4,1,2)$ est pairement réductible car $(2,2,1,4,1,2)=(1,2,1,2) \oplus (2,1,2,1)$;
\item $(1,2,1,2,1,2,1,2)$ est pairement réductible car $(1,2,1,2,1,2,1,2)=(1,2,1,2) \oplus (0,1,2,1,2,0)$;
\item $(1,1,1,1,1,1)$ est pairement irréductible car dans le cas contraire elle serait la somme de deux $\lambda$-quiddités de taille 4 et il n'y a pas de $\lambda$-quiddité de la forme $(a,1,1,b)$.
\end{itemize}
}
\end{examples}

\noindent Graphiquement, la notion de $\lambda$-quiddité sur $\mathbb{Z}$ pairement réductible signifie qu'il existe un étiquetage admissible aboutissant à cette quiddité et possédant les deux propriétés suivantes :
\begin{itemize}
\item on peut, en coupant suivant une diagonale, obtenir deux sous-polygones possédant un nombre pair de sommet;
\item l'étiquetage admissible du polygone initial induit un étiquetage admissible pour chacun des deux sous-polygones.
\end{itemize}

\noindent Par exemple, cela donne (avec tous les triangles étiquetés avec 1) :

$$
\shorthandoff{; :!?}
\xymatrix @!0 @R=0.45cm @C=0.8cm
{
&2\ar@{-}[ld]\ar@{-}[rd]&
\\
2\ar@{-}[dd]\ar@{-}[]&&2\ar@{-}[dd]
\\
 &&& \longmapsto
\\
1\ar@{-}[]&&1\ar@{-}[]
\\
&4\ar@{-}[lu]\ar@{-}[ru]\ar@{-}[uuuu]\ar@{-}[ruuu]\ar@{-}[luuu]&
}
\xymatrix @!0 @R=0.45cm @C=0.8cm
{
&1\ar@{-}[ld]&
\\
2\ar@{-}[dd]
\\
&&\bigoplus
\\
1\ar@{-}[]&&
\\
&2\ar@{-}[lu]\ar@{-}[uuuu]\ar@{-}[luuu]&
} 
\xymatrix @!0 @R=0.45cm @C=0.8cm
{
1\ar@{-}[rd]&
\\
&2\ar@{-}[dd]
\\
\\
&1\ar@{-}[]
\\
2\ar@{-}[ru]\ar@{-}[uuuu]\ar@{-}[ruuu]
}$$

\noindent Cette interprétation graphique doit néanmoins être considérée avec précaution. En effet, il est possible pour une même quiddité d'avoir deux étiquetages admissibles différents, un ne vérifiant pas les deux conditions précédentes et un autre les vérifiant. Par exemple, pour $(1,2,1,2,1,2,1,2)$ on a :

$$
\shorthandoff{; :!?}
\xymatrix @!0 @R=0.40cm @C=0.5cm
{
&&1\ar@{-}[lldd] \ar@{-}[rr]&&2\ar@{-}[rrdd]\ar@{-}[lldddddd]\ar@{-}[rrdddd]&
\\
&&&
\\
2\ar@{-}[dd]\ar@{-}[rrrruu]&&&&&& 1 \ar@{-}[dd]
\\
&&0&&0
\\
1&&&&&& 2
\\
&&&
\\
&&2 \ar@{-}[rr]\ar@{-}[rrrruu]\ar@{-}[lluuuu]\ar@{-}[lluu] &&1 \ar@{-}[rruu]
}
\qquad
\qquad
\xymatrix @!0 @R=0.40cm @C=0.5cm
{
&&1\ar@{-}[lldd] \ar@{-}[dddddd]\ar@{-}[rrrrdddd]\ar@{-}[rr]&&2\ar@{-}[rrdd]\ar@{-}[rrdddd]&
\\
&&&
\\
2\ar@{-}[dd]&&&&&& 1 \ar@{-}[dd]
\\
&&&-1
\\
1&&&&&& 2
\\
&&&
\\
&&2 \ar@{-}[rr]\ar@{-}[rrrruu]\ar@{-}[lluuuu]\ar@{-}[lluu] &&1 \ar@{-}[rruu]
}
$$

\noindent Les triangles non étiquetés sur la figure sont étiquetés avec 1.
\\
\\Cette notion de solution pairement réductible est reliée à notre problème initial via le résultat ci-dessous :

\begin{proposition}
\label{52}

$(a_{1}i,a_{2}i,\ldots,a_{2n-1}i,a_{2n}i)$ est une $\lambda$-quiddité irréductible sur $<i>$ si et seulement si $\varphi_{1}(a_{1}i,a_{2}i,\ldots,a_{2n-1}i,a_{2n}i)=(a_{1},-a_{2},\ldots,a_{2n-1},-a_{2n})$ est pairement irréductible.

\end{proposition}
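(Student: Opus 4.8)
La stratégie est de reprendre la démonstration du point ii) du théorème \ref{26} en n'en changeant que le décompte des tailles. Le point de départ est l'observation suivante : d'après la proposition \ref{51}, toute $\lambda$-quiddité sur $<i>$ est de taille paire. Donc, si $(a_{1}i,\ldots,a_{2n}i)$ est réductible sur $<i>$, elle s'écrit (à $\sim$ près) comme somme de deux $\lambda$-quiddités sur $<i>$ — ce sont bien deux $\lambda$-quiddités, l'une par définition et l'autre grâce à la propriété de l'opération $\oplus$ rappelée en section \ref{RP} — dont chacune est de taille paire \emph{et} supérieure à 3, donc supérieure à 4. Autrement dit, la réductibilité sur $<i>$ revient exactement à s'écrire, à $\sim$ près, comme somme de deux $\lambda$-quiddités sur $<i>$ de taille paire supérieure à 4 : c'est la notion de \og pairement réductible \fg transposée à $<i>$. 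Il suffira donc de démontrer l'équivalence \og $(a_{1}i,\ldots,a_{2n}i)$ est réductible sur $<i>$ si et seulement si $\varphi_{1}(a_{1}i,\ldots,a_{2n}i)$ est pairement réductible \fg, puis d'en prendre la contraposée.

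Les deux implications sont les deux moitiés de la preuve du point ii) du théorème \ref{26}, transcrites pour $k=1$. Pour le sens direct, on part d'une décomposition $(a_{1}i,\ldots,a_{2n}i) \sim (b_{1}i,\ldots,b_{2l}i) \oplus (c_{1}i,\ldots,c_{2l'}i)$ avec $l,l' \geq 2$ ; en appliquant $\varphi_{1}$ et en reprenant les manipulations sur les continuants de cette preuve, on obtient \[\varphi_{1}(a_{1}i,\ldots,a_{2n}i) \sim (b_{1},-b_{2},\ldots,b_{2l-1},-b_{2l}) \oplus (-c_{1},c_{2},\ldots,-c_{2l'-1},c_{2l'}),\] où le premier terme est $\varphi_{1}(b_{1}i,\ldots,b_{2l}i)$, donc une $\lambda$-quiddité sur $\bZ$, et le second est l'opposé de $\varphi_{1}(c_{1}i,\ldots,c_{2l'}i)$, donc aussi une $\lambda$-quiddité sur $\bZ$ par la proposition \ref{opposé} ; tous deux sont de taille paire supérieure à 4, donc $\varphi_{1}(a_{1}i,\ldots,a_{2n}i)$ est pairement réductible. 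Pour la réciproque, on part d'une décomposition $(a_{1},-a_{2},\ldots,a_{2n-1},-a_{2n}) \sim (b_{1},\ldots,b_{2l}) \oplus (c_{1},\ldots,c_{2l'})$ avec $l,l' \geq 2$ témoignant de la réductibilité paire, puis, par l'argument de surjectivité de la même preuve joint à la proposition \ref{opposé}, on remonte à \[(a_{1}i,\ldots,a_{2n}i) \sim (b_{1}i,-b_{2}i,\ldots,b_{2l-1}i,-b_{2l}i) \oplus (-c_{1}i,c_{2}i,\ldots,-c_{2l'-1}i,c_{2l'}i),\] somme de deux $\lambda$-quiddités sur $<i>$ de taille supérieure à 4 (donc supérieure à 3) : $(a_{1}i,\ldots,a_{2n}i)$ est réductible sur $<i>$. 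On remarquera que l'hypothèse \og $k \neq 1$ \fg dans la preuve du théorème \ref{26} ne sert qu'à garantir que les deux morceaux sont de taille paire ; ici, c'est l'hypothèse \og pairement réductible \fg qui le fournit directement.

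Le seul point demandant un peu de soin est que $\varphi_{1}$ ne commute pas exactement à la relation $\sim$ : une permutation circulaire d'un cran change $\varphi_{1}(a_{1}i,\ldots,a_{2n}i)$ en l'opposé du $2n$-uplet décalé, de sorte que les équivalences écrites ci-dessus ne valent qu'à un changement de signe global près. Cela est sans conséquence, car la proposition \ref{opposé} garantit que l'opposé d'une $\lambda$-quiddité en est une et préserve la (ir)réductibilité — c'est précisément ce que gèrent les appels répétés à la proposition \ref{opposé} dans la preuve du théorème \ref{26}. Au fond, l'énoncé ne dit rien de plus que ceci : l'obstruction signalée juste après le théorème \ref{26}, à savoir que la $\lambda$-quiddité irréductible de taille 3 sur $\bZ$ qu'est $(1,1,1)$ n'a pas d'antécédent par $\varphi_{1}$, disparaît dès qu'on interdit les morceaux de taille impaire dans les décompositions, et \og pairement réductible \fg est exactement la notion qui les interdit.
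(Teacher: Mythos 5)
Votre preuve est correcte et suit essentiellement la même démarche que l'article, dont la démonstration se réduit à la phrase \og la preuve est analogue à celle du théorème \ref{26} \fg : vous explicitez précisément cette adaptation (les morceaux sont automatiquement de taille paire par la proposition \ref{51}, ce qui remplace l'hypothèse $k \neq 1$, et les changements de signe liés à $\sim$ sont absorbés par la proposition \ref{opposé}). Rien à redire.
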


\begin{proof}

La preuve est analogue à celle du théorème \ref{26}. 

\end{proof}

\noindent On termine par la conjecture ci-dessous :

\begin{con}

Il existe des solutions pairement irréductibles de taille arbitrairement grande.

\end{con}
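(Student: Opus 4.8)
\noindent Pour attaquer cette conjecture, on se ramène d'abord à $\bZ$ : d'après la bijection $\varphi_{1}$ et la proposition \ref{52}, il suffit de produire, pour une infinité d'entiers $N$, une $\lambda$-quiddité pairement irréductible sur $\bZ$ de taille $2N$. On dispose pour cela d'un critère explicite, obtenu par un calcul de continuants. Si $(c_{1},\ldots,c_{2N})$ est pairement réductible alors, quitte à permuter circulairement, on peut l'écrire $(a_{1},\ldots,a_{2j})\oplus(b_{1},\ldots,b_{2N-2j+2})$ avec $2\leq j\leq N-1$ ; on a alors $(c_{2},\ldots,c_{2j-1})=(a_{2},\ldots,a_{2j-1})$, et comme le coefficient inférieur droit de $M_{2j}(a_{1},\ldots,a_{2j})$, qui vaut $-K_{2j-2}(a_{2},\ldots,a_{2j-1})$, est égal à $\pm 1$, on obtient $K_{2j-2}(c_{2},\ldots,c_{2j-1})=\pm 1$. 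Réciproquement, si une fenêtre cyclique $W=(c_{i+1},\ldots,c_{i+2t})$ de longueur paire $2t$ avec $1\leq t\leq N-2$ vérifie $K_{2t}(W)=\pm 1$, on reconstruit une telle décomposition, les deux extrémités de la $\lambda$-quiddité de taille $2t+2$ valant au signe près les continuants des deux sous-fenêtres de longueur $2t-1$ de $W$, l'autre facteur étant alors lui aussi une $\lambda$-quiddité. En résumé : \emph{$(c_{1},\ldots,c_{2N})$ est pairement irréductible si et seulement si $K_{2t}(c_{i+1},\ldots,c_{i+2t})\neq\pm 1$ pour tout $i$ et tout $t\in[\![1;N-2]\!]$ (indices modulo $2N$).}

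\noindent Il reste à exhiber une $\lambda$-quiddité sur $\bZ$ dont tous les continuants des facteurs cycliques de longueur paire $\leq 2N-4$ évitent $\pm 1$, et ce pour $N$ arbitrairement grand. La quiddité $(4,1,3,1,4,1,3,1)$ — celle de la triangulation de l'octogone de triangles $(1,2,3),(3,4,5),(5,6,7),(7,8,1),(1,3,5),(1,5,7)$ étiquetés uniquement par des $1$ — convient (ses fenêtres de longueurs $2$ et $4$ ont leurs continuants dans $\{2,3\}$), mais elle ne se prolonge pas de façon évidente : la famille périodique $(4,1,3,1)^{2m}$, bien qu'étant une $\lambda$-quiddité pour tout $m$, échoue dès $m=2$ car la fenêtre $(4,1,3,1,4,1)$ a pour continuant $1$. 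Ceci n'est pas un hasard : une famille périodique $w^{k}$ qui est une $\lambda$-quiddité a nécessairement $M_{|w|}(w)$ d'ordre fini, donc à puissances bornées, de sorte que les continuants de ses longs facteurs ne prennent qu'un nombre fini de valeurs et finissent par rencontrer $\pm 1$. Le plan est donc de construire une famille \emph{apériodique} : on fabrique, à partir d'une suite de triangulations \og récursives\fg{} de polygones, une suite de $\lambda$-quiddités (les étiquettes étant par exemple quelques grandes paires $(m,-m)$ bien placées, le reste valant $1$, de sorte à forcer la croissance des continuants), choisie de manière que le mot sous-jacent n'ait aucun facteur de longueur paire de continuant $\pm 1$.

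\noindent L'obstacle principal est précisément cette minoration uniforme $\left|K_{2t}(W)\right|\geq 2$ sur tous les facteurs de longueur paire de la famille, de toute longueur $\leq 2N-4$ : pour les facteurs courts c'est une vérification finie sur les motifs locaux, pour les facteurs très longs les continuants croissent pourvu que le mot ne contienne pas de longues plages de $1$, mais le régime intermédiaire, où $\left|K_{2t}\right|$ reste modéré, demande un contrôle arithmétique fin — le piège de la périodicité montre qu'aucun argument naïf n'y suffira, et qu'il faut une famille dont la combinatoire est taillée sur mesure, à la manière de ces mots (sturmiens, substitutifs) dont on sait contrôler les continuants de tous les facteurs. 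Une voie alternative passe par les polygones : établir, grâce à une propriété de rigidité, que la famille n'admet essentiellement qu'un seul étiquetage admissible, puis conclure en remarquant qu'on peut choisir la triangulation pour que son arbre dual ne possède aucune arête séparant deux sous-arbres de tailles paires (par exemple un arbre binaire complet auquel on attache un sommet pendant), ce qui force alors la pairement irréductibilité. Mais l'exemple $(1,2,1,2,1,2,1,2)$ ci-dessus montre que la rigidité est malaisée sur $\bZ$, contrairement au cas transcendant du théorème \ref{38} : c'est ce défaut de rigidité qui fait, dans les deux approches, le cœur de la difficulté.
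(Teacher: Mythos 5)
Votre texte ne démontre pas l'énoncé : la construction d'une famille de solutions pairement irréductibles de taille arbitrairement grande --- qui est précisément le contenu de la conjecture, laissée ouverte dans l'article --- n'est jamais effectuée, et vous le reconnaissez vous-même en qualifiant le contrôle des continuants dans le régime intermédiaire de \og cœur de la difficulté \fg{}. Ce que vous établissez réellement est une reformulation correcte et utile : pour $2N\geq 6$, une $\lambda$-quiddité $(c_{1},\ldots,c_{2N})$ sur $\bZ$ est pairement réductible si et seulement s'il existe une fenêtre cyclique de longueur paire $2t$, $1\leq t\leq N-2$, de continuant $\pm 1$. Le sens direct vient du coefficient inférieur droit $-K_{2t}(b_{2},\ldots,b_{2t+1})$ de la matrice $M_{2t+2}(b_{1},\ldots,b_{2t+2})=\pm Id$ d'un facteur, le sens réciproque de la reconstruction des deux extrémités à partir des continuants des sous-fenêtres de longueur $2t-1$ (la condition sur le quatrième coefficient étant automatique car le déterminant vaut $1$), le facteur complémentaire étant une $\lambda$-quiddité par la propriété rappelée après la définition \ref{22}. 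Il faudrait seulement préciser que la définition \ref{44} s'entend à équivalence $\sim$ près, comme la définition \ref{24}, et dire un mot de l'inversion de l'ordre, sans conséquence puisque $K_{n}(a_{1},\ldots,a_{n})=K_{n}(a_{n},\ldots,a_{1})$ ; notez aussi que le détour initial par $\varphi_{1}$ est superflu, la conjecture portant déjà sur $\bZ$. Vos vérifications numériques sur $(4,1,3,1,4,1,3,1)$ et l'obstruction pour les familles périodiques $w^{k}$ (l'ordre fini de $M_{|w|}(w)$ fournit, pour $k$ assez grand, une fenêtre $w^{d}$ de continuant $1$) sont exactes.

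Mais le pas décisif manque : il faudrait exhiber, pour une infinité de $N$, une $\lambda$-quiddité de taille $2N$ dont toutes les fenêtres cycliques paires de longueur au plus $2N-4$ évitent $\pm 1$, et aucune des deux pistes esquissées n'est menée à terme. La piste \og mots apériodiques bien choisis \fg{} ne propose aucun candidat explicite au-delà de la taille $8$ ni aucun argument garantissant la minoration $\left|K_{2t}\right|\geq 2$ pour les longueurs intermédiaires ; la piste \og rigidité des étiquetages admissibles \fg{} repose sur une propriété dont vous montrez vous-même, avec $(1,2,1,2,1,2,1,2)$, qu'elle est en défaut sur $\bZ$. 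En l'état, votre proposition fournit un critère de réductibilité paire et un diagnostic pertinent des obstacles, mais la conjecture reste entièrement ouverte.
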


\end{document}